\newtheorem{theorem}{Theorem}
\newtheorem{lemma}[theorem]{Lemma}
\newtheorem{proposition}[theorem]{Proposition}
\newtheorem{corollary}[theorem]{Corollary}
\newtheorem{remark}[theorem]{Remark}
\definecolor{darkred}{rgb}{0.8,0,0}
\definecolor{darkblue}{rgb}{0,0,0.7}
\definecolor{darkgreen}{rgb}{0,0.4,0}
\newcommand{\RR}{\mathbb R}
\newcommand{\dd}{\;{\rm d}}
\newcommand{\YY}{{\mathcal Y}}
\newcommand{\KK}{{\mathcal K}}
\newcommand{\PP}{{\mathcal P}}
\newcommand{\LL}{{\rm L}}
\newcommand{\HH}{{\rm H}}
\newcommand{\rmW}{{\rm W}}
\newcommand{\EE}{{\mathcal E}}
\newcommand{\FF}{{\mathcal F}}
\newcommand{\WW}{{\mathcal W}}
\newcommand{\chls}{C_{\rm HLS}\,}
\newcommand{\R}{{\mathcal R}}
\newcommand{\DD}{{\mathcal D}}
\newcommand{\C}{{\mathcal C}}
\newcommand{\id}{{\rm id}}
\newcommand{\tr}{{\rm Tr}}
\renewcommand{\ln}{{\rm log}}
\begin{document}

\title[The parabolic-parabolic Keller-Segel system as a gradient flow]{The parabolic-parabolic Keller-Segel system with critical diffusion as a gradient flow in $\RR^d$, $d \ge 3$}

\author[A. Blanchet]{Adrien Blanchet$^1$}
\address{$^1$ TSE (GREMAQ, CNRS UMR~5604, INRA UMR~1291, Universit\'e de Toulouse), 21 All\'ee de Brienne, F--31000 Toulouse, France}
\email{Adrien.Blanchet@univ-tlse1.fr}

\author[Ph. Lauren\c cot]{Philippe Lauren\c cot$^2$}
\address{$^2$ Institut de Math\'ematiques de Toulouse, CNRS UMR~5219, Universit\'e de Tou\-lou\-se, F--31062 Toulouse cedex 9, France}
\email{laurenco@math.univ-toulouse.fr}

\date{\today}

\thanks{Partially supported by the project EVaMEF ANR-09-JCJC-0096-01.}

\begin{abstract} It is known that, for the parabolic-elliptic Keller-Segel system with critical porous-medium diffusion in dimension $\RR^d$, $d \ge 3$ (also referred to as the quasilinear Smoluchowski-Poisson equation), there is a critical value of the chemotactic sensitivity (measuring in some sense the strength of the drift term) above which there are solutions blowing up in finite time and below which all solutions are global in time. This global existence result is shown to remain true for the parabolic-parabolic Keller-Segel system with critical porous-medium type diffusion in dimension $\RR^d$, $d \ge 3$, when the chemotactic sensitivity is below the same critical value. The solution is constructed by using a minimising scheme involving the Kantorovich-Wasserstein metric for the first component and the $L^2$-norm for the second component. The cornerstone of the proof is the derivation of additional estimates which relies on a generalisation to a non-monotone functional of a method due to Matthes, McCann, \& Savar\'e (2009).
\end{abstract}

\keywords{chemotaxis; Keller-Segel model; degenerate diffusion; minimising scheme; Wasserstein distance}

\subjclass[2000]{35K65, 35K40, 47J30, 35Q92, 35B33}
\maketitle

\section{Introduction}
In space dimension $2$, the classical parabolic-parabolic Keller-Segel system reads~\cite{KS71}:
\begin{equation*}
\left\{
  \begin{array}{l}
\partial_t \rho= {\rm div} \left[ \nabla \rho - \chi_0 \,\rho\,\nabla c\right]\,, \vspace{.3cm}\\
\tau \,\partial_t c = D_0 \Delta c -\alpha_0\, c + \beta_0\, \rho\,,
  \end{array}
\quad (t,x) \in (0,\infty) \times \RR^2\,,
\right.
\end{equation*}
where $\rho\ge 0$ and $c\ge 0$ are the density of cells and the concentration of chemo-attractant, respectively, $\chi_0>0$, $D_0$, and $\beta_0$ are positive given constants, and $\tau$ and $\alpha_0$ are non-negative given constants. This system is one of the simplest models to describe the aggregation of cells by chemotaxis: the diffusion of the amoebae in a Petri dish is counterbalanced by the attraction toward higher concentrations of chemo-attractant that they themselves emit. This model has been widely studied mathematically in the last two decades with a main focus on the so-called parabolic-elliptic Keller-Segel system (also referred to as the Patlak-Keller-Segel system or the Smoluchowski-Poisson equation in astrophysics) which corresponds to the choice $\tau=0$, see~\cite{B3,Ho03,Ho04,P07} for a review. In particular, a striking feature of the Patlak-Keller-Segel system is that, given a non-negative and integrable initial condition $\rho_0$, there exists a global solution only if $\|\rho_0\|_1\le 8\pi\chi_0\beta_0/D_0$ while solutions blow in finite time if $\|\rho_0\|_1> 8\pi\chi_0\beta_0/D_0$. On the one hand, it is worth pointing out that this phenomenon has some biological and physical relevance as it corresponds to the formation of aggregates in biology or the gravitational collapse in astrophysics, see~\cite{B3,Ho03,Ho04,P07} and the references therein. On the other hand, let us also mention that such a behaviour is only observed in two space dimensions: indeed, solutions do exist globally in one space dimension whatever the value of $\|\rho_0\|_1$ while, in space dimension greater or equal to three, there are solutions blowing up in finite time emanating from initial data $\rho_0$ with arbitrary small mass $\|\rho_0\|_1$. 

It has been shown recently that, in higher space dimensions $d \ge 3$, a generalisation of this model, which is known in astrophysics as the generalised Smoluchowski-Poisson system~\cite{C02}, exhibits a similar threshold phenomenon. It differs from the above classical Keller-Segel system by a nonlinear diffusion and reads:
 \begin{equation}
  \label{PKS}
\left\{
  \begin{array}{l}
\partial_t \rho= {\rm div} \left[ \nabla \rho^m - \chi_0 \rho\nabla c\right]\,, \vspace{.3cm}\\
\tau \,\partial_t c = D_0 \Delta c -\alpha_0 \,c + \beta_0 \,\rho\,,
  \end{array}
\quad (t,x) \in (0,\infty) \times \RR^d\,,
\right.
\end{equation}
where 
\begin{equation*}
 m=2-\frac{2}{d} \in (1,2)\;,
\end{equation*}
the parameters $\chi_0>0$, $D_0$, and $\beta_0$ are positive given constants, and $\tau$ and $\alpha_0$ are non-negative given constants. With this choice of $m$, observe that the porous medium diffusion $\Delta \rho^m$ scales in the same way as the interaction term ${\rm div} [\rho\nabla c]$ and that we recover $m=1$ for $d=2$. Several results are now available for the parabolic-elliptic version of \eqref{PKS} corresponding to the choice $\tau=0$, see~\cite{BRB11,BCL09,S07b,S07,S06,SY11,SuT09a,SuT09b} and it was shown in \cite{BCL09,SuT09b} that there is a critical value $M_c$ of the mass $\|\rho_0\|_1$ of the initial condition $\rho_0$ depending upon $d$, $\chi_0$, $D_0$, and $\beta_0$ such that:
\begin{itemize}
\item if $\|\rho_0\|_1\le M_c$, then there is a global solution to~\eqref{PKS} with initial condition $\rho_0$,
\item given any $M>M_c$, there is at least one initial condition $\rho_0\ge 0$ with $\|\rho_0\|_1=M$ such that the corresponding solution to \eqref{PKS} blows up in finite time.
\end{itemize}
Though the well-posedness of the parabolic-parabolic Keller-Segel system~\eqref{PKS} ($\tau>0$) has been investigated recently, studies focused on the case where the exponent of the diffusion is strictly above $2(d-1)/d$~\cite{SeS06,S06} and nothing seems to be known when $m=2(d-1)/d$. The purpose of this paper is not only to show that global existence is true in that case for sufficiently small masses $\|\rho_0\|_1$ but also to give a quantitative estimate on the smallness condition related to a functional inequality. We actually prove global existence as soon as $\|\rho_0\|_1<M_c$, where $M_c$ is the already mentioned critical mass associated to the parabolic-elliptic Keller-Segel system.

Before describing more precisely our result and its proof, let us first get rid of inessential constants in \eqref{PKS} and normalise the initial condition $\rho_0$ for $\rho$ by introducing the rescaled functions
\begin{equation*}
\rho(t,x)=Ru(Tt,Xx)\quad\mbox{and}\quad c(t,x)=\Gamma v(Tt,Xx)\,, \qquad (t,x)\in [0,\infty)\times\RR^d\;,
\end{equation*}
with 
\begin{equation*}
R=D_0^{d/(d-2)},\;X=\frac{D_0^{1/(d-2)}}{\|\rho_0\|_{1}^d},\;T=\frac{D_0^{d/(d-2)}}{\|\rho_0\|_{1}^{2/d}}\;\mbox{ and } \;\Gamma=\beta_0 \|\rho_0\|_{1}^{2/d}\;.
\end{equation*}
Then $(u,v)$ solves
\begin{equation}
  \label{resPKS}
\left\{
  \begin{array}{l}
\partial_t u=  {\rm div} \left[ \nabla u^m - \chi  u\nabla v\right]\,, \vspace{.3cm}\\
\tau \partial_t v = \Delta v - \alpha\, v +  u\,,
  \end{array}
\quad (t,x) \in (0,\infty) \times \RR^d\,,  
\right.
\end{equation}
with
\begin{equation*}
\chi:=\frac{\chi_0}{D_0}\, \beta_0\,  \|\rho_0\|_{1}^{2/d}>0 \quad\mbox{ and }\quad \alpha:= \frac{\alpha_0}{D_0^{d/(d-2)}}\, \|\rho_0\|_{1}^{2/d}\ge 0\;,
\end{equation*}
while the initial condition $u_0$ of $u$ satisfies $\|u_0\|_1=1$. Owing to this transformation, the smallness condition for global existence on $\|\rho_0\|_1$ is equivalent to a smallness condition on $\chi$. 

Let us now describe the main result of this paper. We define
\begin{equation}
\chi_c:=\frac2{(m-1)\chls}\,, \label{critchi}
\end{equation}
 where $\chls$ is the constant of the variant of the Hardy-Littlewood-Sobolev inequality established in~\cite[Lemma 3.2]{BCL09}:
\begin{equation}\label{hls0}
  \chls:=\sup \left\{ \frac{\displaystyle{\int_{\RR^d} h(x)\,(\YY_0*h)(x)\dd x}}{\|h\|_m^m\,\|h\|_1^{2/d}} \,:\, h \in (\LL^1\cap \LL^m)(\RR^d), h \neq 0 \right\}< \infty\;.
\end{equation}
Here $\YY_0$ is the Poisson kernel, that is,
\begin{equation*}
\YY_0(x) := \int_0^\infty \frac1 {(4\pi s)^{d/2}} \exp \left(-\frac{|x|^2}{4s} \right)\dd s = c_d\ |x|^{2-d}\,, \quad x\in\RR^d\,, \end{equation*}
where $c_d:=\Gamma(d/2)/(2(d-2)\pi^{d/2})$.  It is shown in~\cite{BCL09} that, when $\tau=\alpha=0$, solutions to \eqref{resPKS} exist globally provided that $\chi\in (0,\chi_c]$ and the main purpose of this paper is to prove that, when $\tau>0$ and $\alpha\ge 0$, solutions to \eqref{resPKS} also exist globally in time if $\chi < \chi_c$.

\begin{theorem}[Global existence]\label{main}
Let $\tau>0$, $\alpha \ge 0$, $u_0$ be a non-negative function in $\LL^1(\RR^d,(1+|x|^2)\dd x)\cap \LL^m(\RR^d)$ satisfying $\|u_0\|_1=1$ and $v_0\in \HH^1(\RR^d)$. If $\chi < \chi_c$ then there exists a weak solution $(u,v)$ to the parabolic-parabolic Keller-Segel system~\eqref{resPKS}, that is, for all $t>0$ and $\xi\in\mathcal{C}_0^\infty(\RR^d)$,
\begin{itemize}
\item $u\in \LL^\infty(0,t;\LL^1(\RR^d,(1+|x|^2)\dd x)\cap \LL^m(\RR^d))$,  $u^{m/2} \in  \LL^2(0,t;\HH^1(\RR^d))$, 
\item $u(t)\ge 0$, $\|u(t)\|_1=1$,
\item $v \in \LL^\infty(0,t;\HH^1(\RR^d))\cap \LL^2(0,t;\HH^2(\RR^d)) \cap \rmW^{1,2}(0,t;\LL^2(\RR^d))$, $v(0)=v_0$,
\end{itemize}
and
\begin{align*}
\int_{\RR^d} \xi\ (u(t) - u_0)\dd x & + \int_0^t \int_{\RR^d} \left( \nabla (u^m) - \chi\ u\ \nabla v \right)\cdot \nabla\xi\dd x\dd s = 0\,, \\ 
\tau\, \partial_t v - \Delta v + \alpha\, v & = u \quad\mbox{ a.e. in }\quad (0,t)\times\RR^d\,.
\end{align*}
\end{theorem}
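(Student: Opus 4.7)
The strategy is a Jordan-Kinderlehrer-Otto type minimising-movement scheme adapted to the joint gradient-flow structure of (\ref{resPKS}): Wasserstein gradient flow for $u$ coupled with an $\LL^2$ gradient flow for $v$, both driven by the free-energy functional
\[
\mathcal{F}(u,v) := \frac{1}{m-1}\int_{\RR^d} u^m \dd x - \chi\int_{\RR^d} uv\dd x + \frac{\chi}{2}\int_{\RR^d} |\nabla v|^2 \dd x + \frac{\chi\alpha}{2}\int_{\RR^d} v^2 \dd x.
\]
For a small time step $h>0$, set $(u_0^h, v_0^h) := (u_0, v_0)$ and recursively define $(u_{n+1}^h, v_{n+1}^h)$ as a minimiser of
\[
(u,v) \longmapsto \frac{1}{2h} W_2^2(u, u_n^h) + \frac{\tau\chi}{2h} \|v-v_n^h\|_2^2 + \mathcal{F}(u,v)
\]
over probability densities $u$ with finite second moment and $v\in \HH^1(\RR^d)$. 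Existence of a minimiser follows from the direct method once $\mathcal{F}$ is bounded below, and the first variations in $u$ (horizontal perturbations) and $v$ (additive perturbations) yield discrete analogues of the two equations of (\ref{resPKS}), consistent in the limit $h\to 0$ with the target weak formulation.

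The crucial coercivity input is the lower bound on $\mathcal{F}$ under the subcritical assumption $\chi<\chi_c$. For $u$ with $\|u\|_1=1$, set $v^\star := (-\Delta + \alpha)^{-1} u$, so that $0 \le v^\star \le \YY_0 * u$ pointwise (by the maximum principle). Writing $v = v^\star + w$ and integrating by parts, the quadratic part in $v$ reduces to a non-negative contribution in $w$:
\[
\mathcal{F}(u,v) = \frac{1}{m-1}\int u^m \dd x - \frac{\chi}{2}\int u v^\star \dd x + \frac{\chi}{2} \int\left(|\nabla w|^2 + \alpha w^2\right)\dd x.
\]
Inequality (\ref{hls0}) with $\|u\|_1=1$ gives $\int u v^\star \dd x \le \chls \|u\|_m^m$, whence
\[
\mathcal{F}(u,v) \ge \left( \frac{1}{m-1} - \frac{\chi\chls}{2} \right) \|u\|_m^m + \frac{\chi}{2}\left( \|\nabla w\|_2^2 + \alpha\|w\|_2^2 \right),
\]
which is non-negative \emph{precisely} because $\chi<\chi_c$. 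Monotonicity of the scheme then yields uniform-in-$h$ bounds on $\|u_n^h\|_m$, while elliptic regularity for $v^\star$ combined with the bound on $w$ gives control of $\|v_n^h\|_{\HH^1}$. A standard moment computation (testing with translated competitors) controls $\int |x|^2 u_n^h \dd x$, and telescoping delivers $\sum_n h^{-1}\bigl(W_2^2(u_{n+1}^h, u_n^h) + \|v_{n+1}^h - v_n^h\|_2^2\bigr) \le C$.

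The main obstacle is the extra regularity estimate $u^{m/2} \in \LL^2(0,T;\HH^1(\RR^d))$, indispensable for passing to the limit in the nonlinear terms $\nabla u^m$ and $u\nabla v$. The plan is a flow-interchange in the spirit of Matthes, McCann, and Savar\'e: perturb $u_{n+1}^h$ with $v_{n+1}^h$ frozen along the heat semigroup $S_s$ (the Wasserstein gradient flow of the Boltzmann entropy $\mathcal{H}(u) := \int u \ln u\dd x$) and combine the minimising property with the evolution variational inequality for the heat flow to deduce
\[
\frac{4}{m}\int \bigl|\nabla (u_{n+1}^h)^{m/2}\bigr|^2 \dd x + \chi\int u_{n+1}^h \Delta v_{n+1}^h \dd x \le \frac{1}{h}\bigl( \mathcal{H}(u_n^h) - \mathcal{H}(u_{n+1}^h)\bigr).
\]
Since the cross term $-\chi\int uv$ in $\mathcal{F}$ is not dissipated along $S_s$, the LHS contains the non-sign-definite $\chi\int u\Delta v$: this is exactly the failure of monotonicity that puts us outside the original framework. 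The discrete Euler-Lagrange equation for $v_{n+1}^h$ rewrites $\Delta v_{n+1}^h = \tau h^{-1}(v_{n+1}^h - v_n^h) - u_{n+1}^h + \alpha v_{n+1}^h$; substituting, summing in $n$, and using (\ref{hls0}) together with the $\HH^1$-bound on $v^h$ and the telescoped $\LL^2$-bound on $v_{n+1}^h - v_n^h$ (plus a Gagliardo-Nirenberg interpolation to absorb the emerging $\int u^2$-type term) should yield the uniform Fisher-type bound $\int_0^T \|\nabla(u^h)^{m/2}\|_2^2 \dd t \le C$. Closing this inequality in the presence of the non-monotone cross-term is where $\chi<\chi_c$ is used a second time and is the crux of the argument; it is the non-monotone extension of Matthes-McCann-Savar\'e alluded to in the abstract.

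Finally, define piecewise-constant interpolants $u^h(t), v^h(t)$ on $[0,T]$. The Wasserstein and $\LL^2$ telescoped bounds give $\frac{1}{2}$-H\"older time-regularity of $u^h$ in $W_2$ and of $v^h$ in $\LL^2$, while the spatial estimates give weak compactness in the function spaces stated in Theorem~\ref{main}. A Wasserstein version of the Aubin-Lions lemma, combined with the Fisher-information bound, yields strong convergence of $u^h$ in $\LL^m_{\rm loc}((0,T)\times\RR^d)$ and of $v^h$ in $\LL^2(0,T;\HH^1_{\rm loc})$; passing to the limit in the discrete Euler-Lagrange equations then yields the two weak formulations stated in the theorem. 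Non-negativity and mass conservation of $u$ are preserved along the scheme, and $v(0)=v_0$ follows from the strong time-continuity of $v$ inherited from the $\rmW^{1,2}(0,T;\LL^2)$ bound.
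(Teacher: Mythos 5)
Your scheme, your energy (yours is $\chi\,\EE_\alpha$, the same up to normalisation), your coercivity argument via comparison with $v^\star = (-\Delta+\alpha)^{-1}u$ and the Hardy–Littlewood–Sobolev inequality~\eqref{hls0}, and your telescoped estimates all match the paper. The genuine divergence — and the gap — is in the flow-interchange step. You propose to flow only $u$ along the heat semigroup while freezing $v = v_{n+1}^h$. This produces the cross term $\chi\int u_{n+1}^h\,\Delta v_{n+1}^h\,\dd x$ in the discrete dissipation inequality, and you plan to handle it by substituting the Euler–Lagrange equation $\Delta v_{n+1}^h = \tau h^{-1}(v_{n+1}^h-v_n^h)+\alpha v_{n+1}^h-u_{n+1}^h$. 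That substitution is circular: before the flow interchange you only know $u_{n+1}^h\in(\LL^1\cap\LL^m)(\RR^d)$ with $m<2$ and $v_{n+1}^h\in\HH^1(\RR^d)$, so $\Delta v_{n+1}^h$ is merely a distribution in $\HH^{-1}$ and the pairing $\int u\,\Delta v$ (equivalently $\int\nabla u\cdot\nabla v$, a product of an $\LL^m$ function with an $\LL^2$ function, $1/m+1/2>1$) is not even defined, let alone equal to the expression you want to substitute. The pointwise Euler–Lagrange equation for $v$ also requires $u\in\LL^2(\RR^d)$, which is precisely what the flow interchange is supposed to deliver. Invoking Gagliardo–Nirenberg to absorb the resulting $\|u\|_2^2$ presupposes that you have already obtained the Fisher bound, so the argument does not close as written.

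The paper resolves this by flowing \emph{both} components: $U$ along $\partial_s U=\Delta U$ (gradient flow of Boltzmann entropy in $\WW_2$) and $V$ along $\partial_s V=\Delta V-\alpha V$ (the $\LL^2$ gradient flow of $\tfrac12\|\nabla V\|_2^2+\tfrac\alpha2\|V\|_2^2$). With both flows running, the computation of $\tfrac{\dd}{\dd s}\EE_\alpha[U,V]$ makes the two copies of the cross term combine with the quadratic Dirichlet dissipation into the complete square $-\|\Delta V-\alpha V+U\|_2^2$, leaving only the benign remainder $\R=\|U\|_2^2-\alpha\int UV$. Since $U(s),V(s)$ are smooth for $s>0$, every term is classical; the Fisher bound $\|\nabla(u^{m/2})\|_2^2$, the $\LL^2$ bound on $\Delta v-\alpha v+u$ (hence $u\in\LL^2$ and $v\in\HH^2$), and the control of $\R$ via Gagliardo–Nirenberg and Young all emerge simultaneously and non-circularly in the limit $s\to 0$. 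This is the content of Proposition~\ref{pr:b5}, and it is exactly the "non-monotone extension of Matthes–McCann–Savar\'e" you refer to: the auxiliary flow is not a Liapunov evolution for $\EE_\alpha$, but the choice of the second flow is what makes the remainder controllable. You should replace the frozen-$v$ perturbation by this two-component flow; the rest of your outline (existence of minimisers, time equicontinuity, Aubin–Lions compactness, passage to the limit) then goes through as you describe.
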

Note that Theorem~\ref{main} is valid without sign restriction on $v_0$. It is however easy to check, using the non-negativity of $u$ and the regularity of $v$, that $v_0 \ge 0$ implies $v \ge 0$. 
\begin{remark}
As for the classical parabolic-parabolic Keller-Segel system in space dimension $d=2$, we do not know what happens for $\chi\ge\chi_c$. It is worth mentioning that, unlike the parabolic-elliptic case, there might be global solutions to~\eqref{resPKS} for $\chi>\chi_c$ as recently shown in~\cite{BCD11} in the two dimensional case.
\end{remark}

\medskip

The proof relies on the fact that the parabolic-parabolic Keller-Segel system~\eqref{resPKS} can be seen as a gradient flow of the energy
\begin{equation}\label{eq:functional}
\EE_\alpha[u,v]:= \int_{\RR^d} \left\{\frac{|u(x)|^m}{\chi(m-1)}-u(x)\,v(x) + \frac12\,|\nabla v(x)|^2+\frac{\alpha}{2}\,v(x)^2 \right\}\dd x \;,
\end{equation}
in $\PP_2(\RR^d)\times\LL^2(\RR^d)$ endowed with the Kantorovich-Wasserstein metric for the first component and the usual $\LL^2$-norm for the second component, where $\PP_2(\RR^d)$ is the set of probability measures on $\RR^d$ with finite second moment. Let us mention at this point that, since the pioneering works~\cite{JKO98,Ot01}, several equations have been interpreted as a gradient flow for a Wasserstein distance, see, e.g.,~\cite{AGS08,AS08,CG04,MMS09,Vi03}, including the parabolic-elliptic Keller-Segel system in two space dimensions which actually can be seen as a non-local partial differential equation and thus handled as a single equation~\cite{BCC08,BCC12}, and the literature concerning this issue is steadily growing. In contrast, there are only a few examples of systems which can be interpreted as gradient flows with respect to some metric involving a Wasserstein distance and, as far as we know, a minimising scheme is used to construct solutions to the parabolic-parabolic Keller-Segel system (with linear diffusion $m=1$) in two space dimensions~\cite{CLxx} and to a thin film approximation of the Muskat problem~\cite{LMxx}.

\medskip

To describe more precisely the approach used herein, we introduce the set
\begin{equation*}
\KK:=(\PP_2\cap \LL^m)(\RR^d) \times \HH^1(\RR^d)
\end{equation*}
on which the energy $\EE_\alpha$ is well-defined, see below, and we construct solutions to \eqref{resPKS} by the now classical minimising scheme: given an initial condition $(u_0,v_0)\in\KK$ and a time step $h>0$, we define a sequence $(u_{h,n},v_{h,n})_{n\ge 0}$ in $\KK$ as follows:
\begin{equation}\label{scheme:jko}
\left\{
  \begin{array}{l}
    (u_{h,0},v_{h,0}) = (u_0,v_0)\,, \vspace{.3cm}\\
\displaystyle(u_{h,n+1},v_{h,n+1})  \in {\rm Argmin}_{(u,v)\in\KK}  \mathcal{F}_{h,n}[u,v]\,, \qquad n\ge 0\,,
  \end{array}
\right.
\end{equation}
where 
\begin{equation*}
   \mathcal{F}_{h,n}[u,v] := \frac{1}{2h}\ \left[ \frac{\WW_2^2(u,u_{h,n})}{\chi} + \tau\ \|v-v_{h,n}\|_2^2 \right] + \EE_\alpha[u,v]\,,
\end{equation*}
and $\WW_2$ is the Kantorovich-Wasserstein distance on $\PP_2(\RR^d)$. Several difficulties arise in the proof of the well-posedness and convergence of the previous minimising scheme. First, as the energy $\EE_\alpha$ is not displacement convex, standard results from~\cite{AGS08,Vi03} do not apply and even the existence of a minimiser is not clear. Nevertheless, the assumption $\chi<\chi_c$ and a further development of the modified Hardy-Littlewood-Sobolev inequality~\eqref{hls0} allow us to obtain an $(\LL^1 \cap \LL^m)(\RR^d) \times \HH^1(\RR^d)$ bound on minimising sequences which permits in particular to pass to the limit in the term in $\EE_\alpha[u,v]$ involving the product $uv$, see Proposition~\ref{pr:b4}, and prove the existence of a minimiser. A similar problem is faced when using this approach to construct global solutions to the parabolic-elliptic Keller-Segel system in two space dimensions~\cite{BCC08,BCC12} and is solved there with the help of the logarithmic Hardy-Littlewood-Sobolev inequality. A second difficulty stems from the fact that, unlike the parabolic-elliptic Keller-Segel system, the Cauchy problem~\eqref{resPKS} cannot be reduced to a single equation. More precisely, to obtain the Euler-Lagrange equation satisfied by a minimiser $(\bar{u},\bar{v})$ of $\mathcal{F}_{h,n}$ in $\KK$, the parameters $h$ and $n$ being fixed, we consider a ``horizontal'' perturbation for $\bar{u}$ and a $\LL^2$-perturbation for $\bar{v}$ defined for $\delta\in (0,1)$ by 
\begin{equation*}
 u_\delta= (\id + \delta\,\zeta) \# \bar{u}\;, \quad v_\delta := \bar{v} + \delta\, w \;,
\end{equation*}
where $\zeta\in\C^\infty_0(\RR^d;\RR^d)$ and $w\in\C^\infty_0(\RR^d)$ are two smooth test functions, $\id$ is the identity function of $\RR^d$, and $T\#\mu$ is the push-forward measure of the measure $\mu$ by the map $T$. Identifying the Euler-Lagrange equation requires to pass to the limit as $\delta\to 0$ in 
\begin{equation*}
\frac{\WW^2_2(u_\delta ,u_{h,n})-\WW^2_2(\bar{u} ,u_{h,n})}{2\delta} \;\;\mbox{ and }\;\; \frac{\|u_\delta\|^m_m-\|\bar{u}\|^m_m}{\delta}\,,
\end{equation*}
which can be performed by standard arguments \cite{AGS08,Vi03}, but also in 
\begin{equation*}
\frac{1}{\delta}\ \int_{\RR^d} (\bar{u}\,\bar{v}-u_\delta\,v_\delta)(x) \dd x = \int_{\RR^d} \bar{u}(x) \left[ \frac{\bar{v}(x)-\bar{v}(x+\delta\zeta(x))}{\delta} - w(x+\delta\zeta(x)) \right]\dd x\;.
\end{equation*}
This is where the main difficulty lies since 
\begin{equation*}
\frac{\bar{v}\!\circ\!(\id + \delta\zeta) -\bar{v}}{\delta} \rightharpoonup \zeta \cdot \nabla\bar{v}\quad \mbox{in $\LL^2(\RR^d)$,} 
\end{equation*}
but $\bar{u}$ is only in $(\LL^1 \cap \LL^m)(\RR^d)$ and $m<2$. Therefore, the regularity of $(\bar{u},\bar{v})$ is not enough to pass to the limit in this term and has to be improved. To this end, a powerful technique is developed in~\cite{MMS09}, the main idea being to use as test functions in the minimising scheme the solution to a suitably chosen simpler gradient flow and analyse the short time behaviour of the outcoming inequality. In the framework considered in~\cite{MMS09} (see also~\cite{BCC12}), the choice of the auxiliary gradient flow naturally comes from the existence of another Liapunov functional which is different from the energy. Such a nice structure does not seem to be available here but it turns out that we are able to somehow extend the scope of this method by finding a simple gradient flow $(s\mapsto (U(s),V(s))$ such that $s\mapsto \EE_\alpha[U(s),V(s)]$ is the sum of a decreasing function and a remainder which can be controlled: for the problem \eqref{resPKS} under study, the choice is the solutions $U$ and $V$ to the initial value problems
\begin{equation*}
  \partial_s U - \Delta U=0\quad\mbox{in $(0,\infty) \times \RR^d$,}\qquad U(0)=\bar{u}\;,
\end{equation*}
and
\begin{equation*}
  \partial_s V - \Delta V + \alpha V=0\quad\mbox{in $(0,\infty) \times \RR^d$,}\qquad V(0)=\bar{v}\;.
\end{equation*}
Owing to the properties of $(U,V)$, we have $\EE_\alpha[\bar{u},\bar{v}]\le \EE_\alpha[U(s),V(s)]$ for all $s\ge 0$ and this particular choice and the known regularity of $(U,V)$ allow us to pass to the limit as $s\to 0$ in the inequality $(\EE_\alpha[U(s),V(s)] - \EE_\alpha[\bar{u},\bar{v}])/s\ge 0$ and finally obtain the desired $\LL^2(\RR^d)$-regularity of $\bar{u}$, see Proposition~\ref{pr:b5} and Corollary~\ref{cor:b6}. This regularity allows us to pass to the limit in the Euler-Lagrange equation. This analysis is performed  in Section~\ref{sec:jko} where we prove the well-posedness of the minimising scheme~\eqref{scheme:jko} and study the properties of the minimisers. Section~\ref{sec:conv} is dedicated to the proof of the convergence of the scheme as $h\to 0$, from which Theorem~\ref{main} follows. 

\section{The minimising scheme}\label{sec:jko}

For $\alpha\ge 0$ and $(u,v) \in (\LL^1\cap \LL^m)(\RR^d) \times \HH^1(\RR^d)$, we define the functional $\EE_\alpha[u,v]$ by~\eqref{eq:functional}. First notice that it is well-defined as the integrability properties of $u$ and $v$ ensure that $uv\in \LL^1(\RR^d)$: indeed, by the H\"older and Sobolev inequalities
\begin{equation}\label{220}
\|uv\|_1 \le \|u\|_{2d/(d+2)}\ \|v\|_{2d/(d-2)} \le C\ \|u\|_m^{m/2}\ \|u\|_1^{1/d}\ \|\nabla v\|_2 < \infty\,.
\end{equation}

We next study the properties of $\EE_\alpha$, $\alpha\ge 0$. To this end, let $\YY_\alpha$ be the Bessel kernel defined for $\alpha\ge 0$ by:
\begin{equation*}
  \YY_\alpha(x):=\int_0^\infty \frac1 {(4\pi s)^{d/2}} \exp \left(-\frac{|x|^2}{4s}-\alpha s \right)\dd s\,, \quad x\in\RR^d\,,
\end{equation*}
the case $\alpha=0$ corresponding to the already defined Poisson kernel. For $u\in\LL^1(\RR^d)$, $S_\alpha(u):=\YY_\alpha*u$ solves
\begin{equation}\label{alphas}
  -\Delta S_\alpha (u)+\alpha S_\alpha(u)=u\quad\mbox{in $\RR^d$}
\end{equation}
in the sense of distributions, see~\cite[Theorem~6.23]{LL01}. The Bessel kernel is also referred to as the screened Poisson or Yukawa potential in the literature. 

We now show that a modified Hardy-Littlewood-Sobolev inequality is valid for the Bessel kernel $\YY_\alpha$ for $\alpha>0$:
\begin{lemma}[Hardy-Littlewood-Sobolev inequality for the Bessel kernel]\label{inequality} For $\alpha>0$,
  \begin{equation}
    \label{eq:hlspoisson}
     \sup \left\{ \frac{\displaystyle{\int_{\RR^d} h(x)\,(\YY_\alpha*h)(x)\dd x}}{\|h\|_m^m\,\|h\|_1^{2/d}}\,:\, h \in (\LL^1 \cap \LL^m)(\RR^d), h \neq 0 \right\}=\chls\,,
  \end{equation}
where $\chls$ is defined in \eqref{hls0}.
\end{lemma}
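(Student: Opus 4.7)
The plan is to prove the two inequalities $\sup \le \chls$ and $\sup \ge \chls$ separately, exploiting on one hand the pointwise comparison between $\YY_\alpha$ and $\YY_0$, and on the other hand a rescaling that drives the parameter $\alpha$ to $0$.

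For the upper bound, the key observation is that $\exp(-\alpha s)\le 1$ for $\alpha,s\ge 0$, which gives $0\le \YY_\alpha(x)\le \YY_0(x)$ pointwise. Since both kernels are non-negative and the denominator in the quotient only depends on $|h|$, one may assume without loss of generality that $h\ge 0$ when taking the supremum (replacing $h$ by $|h|$ cannot decrease the numerator $\int h(\YY_\alpha * h)\dd x$, while $\|h\|_1$ and $\|h\|_m$ are unchanged). For such an $h$ the pointwise comparison and the definition \eqref{hls0} of $\chls$ immediately yield
$$\int_{\RR^d} h(x)\,(\YY_\alpha*h)(x)\dd x \le \int_{\RR^d} h(x)\,(\YY_0*h)(x)\dd x\le \chls\,\|h\|_m^m\,\|h\|_1^{2/d},$$
so the supremum in \eqref{eq:hlspoisson} is $\le \chls$.

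For the reverse inequality, the plan is to use a dilation $h_\lambda(x):=\lambda^d h(\lambda x)$ and show that it effectively sends $\alpha$ to $0$ in the kernel. The substitution $t=s\lambda^2$ in the integral representation of $\YY_\alpha$ yields the scaling identity $\YY_\alpha(x/\lambda)=\lambda^{d-2}\,\YY_{\alpha/\lambda^2}(x)$. A direct change of variables then gives $\|h_\lambda\|_1=\|h\|_1$, $\|h_\lambda\|_m^m = \lambda^{d(m-1)}\|h\|_m^m = \lambda^{d-2}\|h\|_m^m$ (using $m=2-2/d$), and
$$\int_{\RR^d} h_\lambda(x)\,(\YY_\alpha*h_\lambda)(x)\dd x = \lambda^{d-2}\int_{\RR^d} h(x)\,(\YY_{\alpha/\lambda^2}*h)(x)\dd x.$$
Thus the quotient evaluated at $h_\lambda$ is exactly $\int h\,(\YY_{\alpha/\lambda^2}*h)\dd x / (\|h\|_m^m\,\|h\|_1^{2/d})$, so the $\lambda$ has merely replaced $\alpha$ by $\alpha/\lambda^2$ in the kernel while leaving the denominator fixed.

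It then remains to let $\lambda\to\infty$: since $\beta\mapsto \exp(-\beta s)$ is decreasing, the kernels $\YY_{\alpha/\lambda^2}$ increase monotonically to $\YY_0$ on $\RR^d\setminus\{0\}$. Applying the monotone convergence theorem to the non-negative integrand $h(x)\,\YY_{\alpha/\lambda^2}(x-y)\,h(y)$ on $\RR^d\times\RR^d$ gives
$$\lim_{\lambda\to\infty}\int_{\RR^d} h(x)\,(\YY_{\alpha/\lambda^2}*h)(x)\dd x = \int_{\RR^d} h(x)\,(\YY_0*h)(x)\dd x,$$
the limit possibly being $+\infty$. In either case, the supremum in \eqref{eq:hlspoisson} dominates $\int h(\YY_0*h)\dd x/(\|h\|_m^m\,\|h\|_1^{2/d})$ for every non-negative $h\in (\LL^1\cap\LL^m)(\RR^d)\setminus\{0\}$; taking the supremum over such $h$ and invoking \eqref{hls0} produces the lower bound $\ge\chls$. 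The only mildly delicate step is verifying the scaling identity for $\YY_\alpha$ and justifying the monotone convergence passage; everything else is a routine change of variables.
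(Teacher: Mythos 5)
Your argument is correct and follows essentially the same path as the paper: pointwise domination $\YY_\alpha\le\YY_0$ for the upper bound, and the dilation $h_\lambda(x)=\lambda^d h(\lambda x)$, which shifts the parameter to $\alpha/\lambda^2$, for the lower bound. The only (minor and legitimate) variation is that you pass to the limit via monotone convergence, exploiting that $\YY_{\alpha/\lambda^2}$ increases to $\YY_0$, whereas the paper invokes the classical Hardy--Littlewood--Sobolev inequality to get the dominating function $h(\YY_0*h)\in\LL^1$ and then applies dominated convergence; your route bypasses that extra appeal to classical HLS, a small simplification.
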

\begin{proof}
We denote the left-hand side of \eqref{eq:hlspoisson} by $\mu_\alpha$. By the definition of $\YY_\alpha$ and $\YY_0$,
\begin{equation}
  0<\YY_\alpha(x) \le \YY_0(x) \;\;\mbox{ for }\;\; x\in\RR^d\;. \label{b12}
\end{equation}
Therefore, for $h \in (\LL^1\cap \LL^m)(\RR^d)$,
\begin{multline*}
  \int_{\RR^d} h(x)\,(\YY_\alpha*h)(x)\dd x \le \int_{\RR^d} |h(x)|\,(\YY_\alpha*|h|)(x)\dd x \le \int_{\RR^d} |h(x)|\,(\YY_0*|h|)(x)\dd x \\\le \chls \|h\|_m^m\,\|h\|_1^{2/d}\;,
\end{multline*}
whence $\mu_\alpha$ is finite with $\mu_\alpha \le \chls $.

Conversely, given a non-negative function $h \in (\LL^1 \cap \LL^m)(\RR^d)$, we define $h_\lambda(x)=\lambda^dh(\lambda x)$ for $\lambda >1$ and $x \in \RR^d$. Observe that
\begin{multline}
  \label{eq:asterix}
  \|h_\lambda\|_1=\|h\|_1,\qquad \|h_\lambda\|_m^m=\lambda^{d-2}\,\|h\|_m^m\\
\mbox{and}\,\int_{\RR^d} h_\lambda(x)\,(\YY_\alpha*h_\lambda)(x)\dd x = \lambda^{d-2}\int_{\RR^d} h(x)\,(\YY_{\alpha\lambda^{-2}}*h)(x)\dd x\;.
\end{multline}
As $\lambda \to \infty$, $\YY_{\alpha\lambda^{-2}}$ converges pointwisely to $\YY_{0}$ for $x \neq 0$. Moreover, by~\eqref{b12},
$$0 \le h(x)\,(\YY_{\alpha\lambda^{-2}}*h)(x) \le h(x)\,(\YY_0*h)(x)\,, \qquad x\in\RR^d\,.$$
Since $h\, (\YY_0*h)\in \LL^1(\RR^d)$ by the classical Hardy-Littlewood-Sobolev inequality~\cite[Theorem~4.3]{LL01}, we infer from Lebesgue's dominated convergence theorem that
\begin{equation*}
 \lim_{\lambda \to \infty} \int_{\RR^d}h(x)\,(\YY_{\alpha\lambda^{-2}}*h)(x) \dd x =\int_{\RR^d}h(x)\,(\YY_{0}*h)(x) \dd x\;.
\end{equation*}
By~\eqref{eq:asterix}, it implies
\begin{eqnarray*}
\mu_\alpha \ge \lim_{\lambda \to \infty}\frac{\displaystyle{\int_{\RR^d} h_\lambda(x)\,(\YY_\alpha*h_\lambda)(x)\dd x}}{\|h_\lambda\|_m^m\,\|h_\lambda\|_1^{2/d}} & = & \lim_{\lambda \to \infty}\frac{\displaystyle{\int_{\RR^d} h(x)\,(\YY_{\alpha\lambda^{-2}}*h)(x)\dd x}}{\|h\|_m^m\,\|h\|_1^{2/d}} \\
& = &\frac{\displaystyle{\int_{\RR^d} h(x)\,(\YY_{0}*h)(x)\dd x}}{\|h\|_m^m\,\|h\|_1^{2/d}}\;.
\end{eqnarray*}
This inequality being valid for all non-negative $h \in (\LL^1\cap \LL^m)(\RR^d)$, it readily extends to all $h \in (\LL^1 \cap \LL^m)(\RR^d)$ so that $\mu_\alpha \ge \chls $, and the proof is complete.
\end{proof}

With the above notations, we have an alternative formula for $\EE_\alpha$ for $\alpha>0$.
\begin{lemma}\label{below}
 Let $\alpha>0$, $u \in (\LL^1\cap\LL^m)(\RR^d)$, and $v \in \HH^1(\RR^d)$. Then
 \begin{equation*}
   \EE_\alpha[u,v] = \EE_\alpha[u,S_\alpha(u)]+\frac12 \|\nabla \left(v - S_\alpha(u)\right)\|^2_2+\frac{\alpha}{2}\, \|v - S_\alpha(u)\|^2_2\,.
 \end{equation*}
\end{lemma}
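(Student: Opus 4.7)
The identity is a completion-of-squares expansion around $S_\alpha(u)$. I would set $w := v - S_\alpha(u)$ and substitute $v = w + S_\alpha(u)$ into the three $v$-dependent integrands of $\EE_\alpha[u,v]$. Expanding
\[
\frac12\,|\nabla v|^2 = \frac12\,|\nabla w|^2 + \nabla w \cdot \nabla S_\alpha(u) + \frac12\,|\nabla S_\alpha(u)|^2,
\]
and likewise $\alpha v^2/2 = \alpha w^2/2 + \alpha w\, S_\alpha(u) + \alpha\, S_\alpha(u)^2/2$ together with $uv = uw + u\,S_\alpha(u)$, I would then collect the purely-$w$ terms, the purely-$S_\alpha(u)$ terms, and the three cross terms. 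The purely-$S_\alpha(u)$ contributions combined with $u^m/(\chi(m-1))$ exactly reconstruct $\EE_\alpha[u,S_\alpha(u)]$, while the purely-$w$ contributions yield the two norms on the right-hand side. The whole identity thus reduces to showing that the three cross terms integrate to zero.

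This cancellation is precisely the weak form of \eqref{alphas}: testing $-\Delta S_\alpha(u) + \alpha\, S_\alpha(u) = u$ against $w$ gives
\[
\int_{\RR^d} \nabla S_\alpha(u) \cdot \nabla w\dd x + \alpha \int_{\RR^d} S_\alpha(u)\,w\dd x - \int_{\RR^d} u\, w\dd x = 0,
\]
and these are exactly the three cross terms produced by the expansion. The proof therefore reduces to justifying this weak formulation with test function $w$.

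The only genuine obstacle is to verify that $S_\alpha(u) \in \HH^1(\RR^d)$, so that $w \in \HH^1(\RR^d)$ is an admissible test function. For $d \ge 3$, one checks $1 \le 2d/(d+2) \le m$, so interpolation between $\LL^1(\RR^d)$ and $\LL^m(\RR^d)$ yields $u \in \LL^{2d/(d+2)}(\RR^d)$, which embeds continuously into $\HH^{-1}(\RR^d)$ by duality with the Sobolev embedding $\HH^1(\RR^d) \hookrightarrow \LL^{2d/(d-2)}(\RR^d)$. The Lax-Milgram theorem applied to the coercive bilinear form $(f,g) \mapsto \int_{\RR^d} (\nabla f \cdot \nabla g + \alpha fg)\dd x$ on $\HH^1(\RR^d)$ then produces a unique $\HH^1$-solution of $-\Delta S + \alpha S = u$, which must coincide with $\YY_\alpha*u = S_\alpha(u)$ by uniqueness of distributional solutions. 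With this regularity secured, the cancellation of cross terms is immediate and the claimed identity follows.
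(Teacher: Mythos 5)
Your completion-of-squares expansion and the use of the weak form of the Bessel equation to kill the three cross terms are exactly the paper's argument; the difference lies in how you justify $S_\alpha(u)\in\HH^1(\RR^d)$, which is the technical core of the lemma. The paper invokes $W^{2,p}$ elliptic estimates for the Bessel potential (an $\alpha$-rescaled version of a theorem of Stein) to get $\nabla S_\alpha(u)\in\LL^2$ via $S_\alpha(u)\in W^{2,2d/(d+2)}\hookrightarrow W^{1,2}$, and separately uses $\LL^r$ mapping properties of $\YY_\alpha$ to get $S_\alpha(u)\in\LL^2$. Your route — interpolation to put $u\in\LL^{2d/(d+2)}\hookrightarrow\HH^{-1}$, Lax--Milgram for a unique $\HH^1$ solution of $-\Delta S+\alpha S=u$, then identification with $\YY_\alpha * u$ — is more self-contained and avoids second-order elliptic regularity entirely; it also has the pleasant side effect that the cross-term cancellation is immediate in the $\HH^1$--$\HH^{-1}$ duality pairing, without needing $\Delta S_\alpha(u)$ to be a locally integrable function. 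One small point worth making explicit in a final write-up: the identification of the Lax--Milgram solution with $\YY_\alpha * u$ requires a uniqueness statement for the Bessel equation in a class containing both; since $\YY_\alpha\in\LL^1$ so $\YY_\alpha*u\in\LL^{2d/(d+2)}$, both candidates are tempered distributions, and the Fourier multiplier $(|\xi|^2+\alpha)^{-1}$ gives the required uniqueness. With that note, the argument is complete and correct.
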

\begin{proof}
We proceed as in~\cite{CC10} and first claim that $S_\alpha(u)\in\HH^1(\RR^d)$ so that $\EE_\alpha[u,S_\alpha(u)]$ is well defined. Indeed, since $m\in (1,d/2)$ and $u\in (\LL^1\cap \LL^m)(\RR^d)$, we infer from \cite[Theorem~6.23~(v)]{LL01} that $S_\alpha(u)\in\LL^r(\RR^d)$ for all $r\in [1,dm/(d-2m)]$. In addition, using a rescaling with respect to the parameter $\alpha$, we deduce from \cite[Chapter~V, \S~3.3, Theorem~3]{St70} that, for all $p\in (1,m]$, there is $C=C(d,p)$ such that
\begin{equation}
\alpha \|S_\alpha(u)\|_p + \sqrt{\alpha}\ \|\nabla S_\alpha(u)\|_p + \|D^2 S_\alpha(u)\|_p \le C(p)\ \|u\|_p\,. \label{estbessel}
\end{equation}
In particular, since $2d/(d+2)\in (1,m)$, we have $S_\alpha(u)\in \rmW^{2,2d/(d+2)}(\RR^d)$ and the Sobolev embedding guarantees that $\nabla S_\alpha(u)\in \LL^2(\RR^d)$. Also, $dm/(d-2m)>2$ and we thus have $S_\alpha(u)\in \LL^2(\RR^d)$, thereby completing the proof of the claim.

We now compute
\begin{align*}
  \EE_\alpha[u,v] - \EE_\alpha[u,S_\alpha(u)]=&
\int_{\RR^d}\Big\{-u\,[v-S_\alpha(u)]+ \frac12\,\left|\nabla[v-S_\alpha(u)]\right|^2 \\
&\qquad+\nabla[v-S_\alpha(u)]\cdot  \nabla S_\alpha(u)+\frac{\alpha}2\,|v-S_\alpha(u)|^2\\
&\qquad+\alpha\, [v-S_\alpha(u)]\,S_\alpha(u) \Big\}\dd x\\
=&\,\frac12\,\|\nabla[v-S_\alpha(u)]\|^2_2+\frac{\alpha}2\,\|v-S_\alpha(u)\|^2_2\\
&\qquad+ \int_{\RR^d}  [v-S_\alpha(u)]\left[-\Delta S_\alpha(u)+\alpha S_\alpha(u)-u) \right]\dd x
\end{align*}
which gives the stated result by using the Bessel equation~\eqref{alphas}.
\end{proof}
A lower bound on $\EE_\alpha$ follows from Lemmas~\ref{inequality} and~\ref{below}:
\begin{lemma}[Lower bound on $\EE_\alpha$]\label{lem:213}
Let $\chi_c$ be defined in~\eqref{critchi}. If $\alpha>0$ and $(u,v) \in (\LL^1\cap\LL^m)(\RR^d)\times \HH^1(\RR^d)$, then
  \begin{equation}
    \label{eq:e}\tag{i}
    \EE_\alpha[u,v] \ge \frac{\chls}{2\chi}\ \left( \chi_c - \chi\ \|u\|_1^{2/d} \right)\,\|u\|^m_m\;.
  \end{equation}
Moreover there exists $C_1>0$ depending only on $d$ such that
\begin{equation}
  \label{eq:h}\tag{ii}
  \|\nabla v\|^2_2 + \alpha \|v\|^2_2 \le 4\ \EE_\alpha[u,v] + C_1\ \|u\|_1^{2/d}\ \|u\|_m^{m}\;.
\end{equation}
\end{lemma}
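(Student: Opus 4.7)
The plan is to prove (i) by reducing to the case $v = S_\alpha(u)$ via Lemma~\ref{below} and then invoking the modified Hardy--Littlewood--Sobolev inequality of Lemma~\ref{inequality}, and to prove (ii) as a direct Sobolev/interpolation estimate that does not use the Bessel structure at all.

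For (i), I would first apply Lemma~\ref{below}: since the two remainder terms $\tfrac12\|\nabla(v-S_\alpha(u))\|_2^2$ and $\tfrac{\alpha}{2}\|v-S_\alpha(u)\|_2^2$ are non-negative, one obtains $\EE_\alpha[u,v] \ge \EE_\alpha[u,S_\alpha(u)]$. Testing the Bessel equation~\eqref{alphas} against $S_\alpha(u) \in \HH^1(\RR^d)$ (this regularity being ensured by the proof of Lemma~\ref{below}) yields the identity
\begin{equation*}
\|\nabla S_\alpha(u)\|_2^2 + \alpha\,\|S_\alpha(u)\|_2^2 = \int_{\RR^d} u(x)\, S_\alpha(u)(x)\dd x\,,
\end{equation*}
so that the two quadratic-in-$v$ terms in $\EE_\alpha[u,S_\alpha(u)]$ combine with $-\int u\, S_\alpha(u)\dd x$ into
\begin{equation*}
\EE_\alpha[u,S_\alpha(u)] = \frac{\|u\|_m^m}{\chi(m-1)} - \frac12 \int_{\RR^d} u(x)\,(\YY_\alpha * u)(x)\dd x\,.
\end{equation*}
Lemma~\ref{inequality} then bounds the remaining integral by $\chls\,\|u\|_1^{2/d}\,\|u\|_m^m$, and substituting the definition~\eqref{critchi} of $\chi_c$ (which gives $\chls\chi_c/(2\chi) = 1/[\chi(m-1)]$) yields exactly~\eqref{eq:e}.

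For (ii), I would expand $4\,\EE_\alpha[u,v]$ and discard the (non-negative) $\|u\|_m^m$-term, which produces
\begin{equation*}
2\,\|\nabla v\|_2^2 + 2\alpha\,\|v\|_2^2 \le 4\,\EE_\alpha[u,v] + 4\int_{\RR^d} u(x)\,v(x)\dd x\,.
\end{equation*}
Bounding the cross-term by H\"older and the Sobolev embedding $\HH^1(\RR^d)\hookrightarrow \LL^{2d/(d-2)}(\RR^d)$ as in~\eqref{220} gives $\int u\, v\dd x \le C_S\,\|u\|_{2d/(d+2)}\,\|\nabla v\|_2$ for a constant $C_S = C_S(d)$. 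Standard interpolation between $\LL^1(\RR^d)$ and $\LL^m(\RR^d)$, whose exponents close up precisely because $m = 2(d-1)/d$, produces $\|u\|_{2d/(d+2)}^2 \le \|u\|_1^{2/d}\,\|u\|_m^m$. Young's inequality $4ab \le a^2 + 4b^2$ then absorbs one factor $\|\nabla v\|_2^2$ on the right-hand side, leaving $\|\nabla v\|_2^2 + 2\alpha\,\|v\|_2^2 \le 4\,\EE_\alpha[u,v] + 4C_S^2\,\|u\|_1^{2/d}\,\|u\|_m^m$, and \eqref{eq:h} follows with $C_1 = 4C_S^2$ upon noting $\alpha\|v\|_2^2 \le 2\alpha\|v\|_2^2$.

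The only genuinely delicate input is the completion of squares already packaged in Lemma~\ref{below}: once $v$ is replaced by the extremiser $S_\alpha(u)$, part (i) is dictated entirely by the HLS estimate of Lemma~\ref{inequality} and by the algebraic definition of $\chi_c$, while part (ii) is a purely Sobolev/interpolation-Young computation that, in particular, does not really require $\alpha > 0$.
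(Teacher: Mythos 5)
Your proof is correct and takes essentially the same route as the paper: part (i) reduces to $v = S_\alpha(u)$ via Lemma~\ref{below}, simplifies $\EE_\alpha[u,S_\alpha(u)]$ to $\frac{\|u\|_m^m}{\chi(m-1)} - \frac12\int u\,S_\alpha(u)\dd x$ using the Bessel equation~\eqref{alphas}, and then applies Lemma~\ref{inequality} together with the definition~\eqref{critchi} of $\chi_c$; part (ii) is the same H\"older--Sobolev bound on $\int u\,v\dd x$ followed by a Young absorption of the $\|\nabla v\|_2^2$ term, with only a cosmetic difference in the algebra (you absorb $\|\nabla v\|_2^2$ alone and then drop a factor $2$ on $\alpha\|v\|_2^2$, whereas the paper absorbs $\tfrac12(\|\nabla v\|_2^2 + \alpha\|v\|_2^2)$). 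Your closing remark that (ii) does not use $\alpha>0$ is also correct and is exactly how the paper reuses the argument in Lemma~\ref{lem:213b}.
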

\begin{proof}
Owing to the property~\eqref{alphas} of the Bessel potential and the modified Hardy-Littlewood-Sobolev inequality for the Bessel kernel~\eqref{eq:hlspoisson}, we have:
  \begin{align*}
    \EE_\alpha[u,S_\alpha(u)] &= \frac{\|u\|^m_m}{\chi (m-1)}-\frac{1}{2}\int_{\RR^d}u\,S_\alpha(u)\dd x\\
&\ge \frac{\|u\|^m_m}{\chi (m-1)} - \frac{\chls}{2} \,\|u\|^m_m\,\|u\|^{2/d}_1\\
&\ge \frac{\chls}{2\chi}\ \left( \chi_c - \chi\ \|u\|_1^{2/d} \right)\,\|u\|^m_m\;.
  \end{align*}
The estimate~\eqref{eq:e} then readily follows from Lemma~\ref{below} and the above inequality.

Next, by~\eqref{220} and Young's inequality, we have
\begin{align*}
\|\nabla v\|^2_2 + \alpha \|v\|^2_2  &\le 2\EE_\alpha[u,v]+2\|u\,v\|_1\\
&\le 2\EE_\alpha[u,v]+C\ \|u\|_m^{m/2}\ \|u\|_1^{1/d}\ \|\nabla v\|_2 \\
&\le 2\EE_\alpha[u,v]+\frac{\|\nabla v\|^2_2}{2} + C\ \|u\|_1^{2/d}\ \|u\|_m^m \\
&\le 2\EE_\alpha[u,v]+\frac{\|\nabla v\|^2_2 + \alpha \|v\|^2_2}{2} + C\ \|u\|_1^{2/d}\ \|u\|_m^m\;,
\end{align*}
which gives the stated result~\eqref{eq:h}.
\end{proof}

Though Lemma~\ref{below} is not true for $\alpha=0$ since $S_0(u)\not\in \HH^1(\RR^d)$, it turns out that Lemma~\ref{lem:213} is still valid in that case as we show now.

\begin{lemma}[Lower bound on $\EE_0$]\label{lem:213b}
Let $\chi_c$ be defined in~\eqref{critchi}. If $(u,v) \in (\LL^1\cap\LL^m)(\RR^d)\times \HH^1(\RR^d)$, then the statement of Lemma~\ref{lem:213} remains true for $\alpha=0$.
\end{lemma}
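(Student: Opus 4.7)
The plan is to obtain the case $\alpha = 0$ as a limit $\alpha \to 0^+$ of the inequalities in Lemma~\ref{lem:213}, thereby sidestepping the obstruction that $S_0(u)$ need not lie in $\HH^1(\RR^d)$. The key observation is the elementary identity
\begin{equation*}
\EE_\alpha[u,v] = \EE_0[u,v] + \frac{\alpha}{2}\|v\|_2^2,
\end{equation*}
which holds for every $\alpha \ge 0$ and every $(u,v)\in (\LL^1\cap \LL^m)(\RR^d)\times \HH^1(\RR^d)$. Crucially, since $v\in\HH^1(\RR^d)$ entails $\|v\|_2 < \infty$, the perturbation $(\alpha/2)\|v\|_2^2$ vanishes as $\alpha\to 0^+$ at fixed $(u,v)$.

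For statement~(i), I apply Lemma~\ref{lem:213}(i) with $\alpha>0$. Since its right-hand side $(\chls/2\chi)(\chi_c - \chi\|u\|_1^{2/d})\|u\|_m^m$ is independent of $\alpha$, the identity above gives
\begin{equation*}
\EE_0[u,v] + \frac{\alpha}{2}\|v\|_2^2 \ge \frac{\chls}{2\chi}\bigl(\chi_c - \chi\|u\|_1^{2/d}\bigr)\|u\|_m^m,
\end{equation*}
and letting $\alpha\to 0^+$ yields~\eqref{eq:e} with $\alpha = 0$. For statement~(ii), I insert the identity into Lemma~\ref{lem:213}(ii) for $\alpha>0$, which becomes
\begin{equation*}
\|\nabla v\|_2^2 + \alpha\|v\|_2^2 \le 4\EE_0[u,v] + 2\alpha\|v\|_2^2 + C_1\|u\|_1^{2/d}\|u\|_m^m,
\end{equation*}
so that $\|\nabla v\|_2^2 \le 4\EE_0[u,v] + \alpha\|v\|_2^2 + C_1\|u\|_1^{2/d}\|u\|_m^m$. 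Sending $\alpha\to 0^+$ again kills the remainder, and produces~\eqref{eq:h} for $\alpha=0$ with the same constant $C_1 = C_1(d)$.

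I do not expect any real difficulty here: the whole argument is a one-line continuity-in-$\alpha$ remark. The only point that needs to be noted is precisely the one that forces the limiting approach in the first place, namely that a direct proof modelled on Lemma~\ref{lem:213} breaks down because Lemma~\ref{below} is unavailable at $\alpha=0$: the Poisson kernel $\YY_0(x)=c_d|x|^{2-d}$ has borderline decay at infinity, so $S_0(u)=\YY_0\ast u$ generally fails to belong to $\LL^2(\RR^d)$ for $u\in(\LL^1\cap \LL^m)(\RR^d)$, and the splitting of $\EE_0[u,v]$ into $\EE_0[u,S_0(u)]$ plus a non-negative remainder is therefore not meaningful. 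The approximation by $\alpha>0$ bypasses this cleanly because the Bessel kernel $\YY_\alpha$ has exponential decay, ensuring $S_\alpha(u)\in\HH^1(\RR^d)$ for every $\alpha>0$.
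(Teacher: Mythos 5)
Your proof is correct, and it takes a genuinely different and substantially simpler route than the paper. The paper fixes $\alpha=0$ from the outset and approximates $S_0(u)$ by $S_\varepsilon(u)$: it rewrites $\EE_0[u,v]-\EE_0[u,S_\varepsilon(u)]$, absorbs an error term by Young's inequality, and is then forced into a delicate interpolation argument (choosing $p_0$ near $1$) to show that the residual quantities $\varepsilon\,\|S_\varepsilon(u)\|_2^2$ and $\varepsilon^2\,\|S_\varepsilon(u)\|_{2d/(d+2)}^2$ vanish as $\varepsilon\to 0$. Your argument instead exploits the trivial decomposition $\EE_\alpha[u,v]=\EE_0[u,v]+\tfrac{\alpha}{2}\|v\|_2^2$ at fixed $(u,v)$, together with the fact that the constant $\chls$ in Lemma~\ref{inequality}, and hence the right-hand side of Lemma~\ref{lem:213}(i), is independent of $\alpha>0$, so letting $\alpha\to 0^+$ costs nothing once $v\in\HH^1(\RR^d)\subset\LL^2(\RR^d)$ is noted. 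This makes the statement essentially a corollary of Lemma~\ref{lem:213} rather than a separate estimate. One small remark on~(ii): inspecting the paper's proof of Lemma~\ref{lem:213}(ii) shows it never uses $\alpha>0$ at all (it is just the definition of $\EE_\alpha$, H\"older, Sobolev, and Young), so~(ii) for $\alpha=0$ is in fact immediate without any limiting argument; your version is nonetheless correct. Overall your approach buys a much shorter proof at the modest cost of being non-self-contained (it leans on the $\alpha>0$ case), whereas the paper's direct approach avoids that dependence.
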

\begin{proof}
Since we cannot use $(u,S_0(u))$ in $\EE_0$, we use an approximation argument: recalling that, given $\varepsilon\in (0,1)$, $S_\varepsilon(u)\in\HH^1(\RR^d)$, we may compute
$$
\EE_0[u,v] - \EE_0[u,S_\varepsilon(u)] = \frac{1}{2}\ \|\nabla(v-S_\varepsilon(u))\|_2^2 - \varepsilon\ \int_{\RR^d} S_\varepsilon(u)\ (v-S_\varepsilon(u)) \dd x \,.
$$
We infer from H\"older's, Sobolev's and Young's inequalities that 
\begin{align*}
\EE_0[u,v] & - \EE_0[u,S_\varepsilon(u)] \\
& \ge \frac{1}{2}\ \|\nabla(v-S_\varepsilon(u))\|_2^2 - \varepsilon\ \| S_\varepsilon(u)\|_{2d/(d+2)}\ \|(v-S_\varepsilon(u))\|_{2d/(d-2)} \\
& \ge \frac{1}{2}\ \|\nabla(v-S_\varepsilon(u))\|_2^2 - C\ \varepsilon\ \| S_\varepsilon(u)\|_{2d/(d+2)}\ \|\nabla (v-S_\varepsilon(u))\|_2\\
& \ge -C\ \varepsilon^2\ \| S_\varepsilon(u)\|_{2d/(d+2)}^2\,,
\end{align*}
whence
\begin{equation}
\EE_0[u,v] \ge \EE_0[u,S_\varepsilon(u)] - C\ \varepsilon^2\ \| S_\varepsilon(u)\|_{2d/(d+2)}^2\,. \label{suppl1}
\end{equation}
It next follows from the modified Hardy-Littlewood-Sobolev inequality \eqref{eq:hlspoisson} that 
\begin{align*}
\EE_0[u,S_\varepsilon(u)] & = \frac{\|u\|_m^m}{\chi(m-1)} - \frac{1}{2}\ \int_{\RR^d} S_\varepsilon(u)\ \left( 2u + \Delta S_\varepsilon(u) \right) \dd x \\
& = \frac{\|u\|_m^m}{\chi(m-1)} - \frac{1}{2}\ \int_{\RR^d} S_\varepsilon(u)\ \left( u + \varepsilon\ S_\varepsilon(u) \right) \dd x \\
& \ge \frac{\|u\|_m^m}{\chi(m-1)} - \frac{\chls}{2}\ \|u\|_m^m\ \|u\|_1^{2/d} - \frac{\varepsilon}{2}\ \|S_\varepsilon(u)\|_2^2 \\
& \ge \frac{\chls}{2\chi}\ \left( \chi_c - \chi\ \|u\|_1^{2/d} \right)\ \|u\|_m^m - \frac{\varepsilon}{2}\ \|S_\varepsilon(u)\|_2^2\,.
\end{align*}
Combining \eqref{suppl1} and the above inequality gives
\begin{equation}
\EE_0[u,v] \ge \frac{\chls}{2\chi}\ \left( \chi_c - \chi\ \|u\|_1^{2/d} \right)\ \|u\|_m^m - C\ \varepsilon^2\ \| S_\varepsilon(u)\|_{2d/(d+2)}^2 - \frac{\varepsilon}{2}\ \|S_\varepsilon(u)\|_2^2\,. \label{suppl2}
\end{equation}
We are then left to study the behaviour of the last two terms of the right-hand side of \eqref{suppl2} as $\varepsilon\to 0$. To this end, we note that $S_\varepsilon(u) = \YY_0*(u-\varepsilon\ S_\varepsilon(u))$ and it follows from the regularity of solutions to the Poisson equation (see, e.g., \cite[Theorem~10.2]{LL01}) and the estimates \eqref{estbessel} for the Bessel potential that, for $p\in (1,m]$, 
\begin{equation}
\|S_\varepsilon(u)\|_{dp/(d-2p)} \le C(p)\ \|u-\varepsilon\ S_\varepsilon(u)\|_p \le C(p)\ \|u\|_p\,.
\end{equation}
We set $p_m:=dm/(d-2m)>1$ and consider $p_0\in (1,2d/(d+2))$ to be determined later on. Since $2\in (p_0,p_m)$ and $2d/(d+2) \in (p_0,p_m)$, we have
$$
\lambda := \frac{2-p_0}{p_m-p_0} \in (0,1)\,, \qquad \mu:= \frac{2d -(d+2)p_0}{(d+2)(p_m-p_0)} \in (0,1)\,, 
$$
and we infer from H\"older's inequality, \eqref{estbessel}, and \eqref{suppl2} that
\begin{align}
\varepsilon\ \|S_\varepsilon(u)\|_2^2 & \le \varepsilon\ \|S_\varepsilon(u)\|_{p_m}^{\lambda p_m}\ \|S_\varepsilon(u)\|_{p_0}^{(1-\lambda)p_0} \nonumber\\ 
& \le C(p_0)\ \varepsilon^{1+(\lambda-1)p_0}\ \|u\|_m^{\lambda p_m}\ \|u\|_{p_0}^{(1-\lambda)p_0}\,, \label{suppl3}
\end{align}
and
\begin{align}
\quad \varepsilon^2\ \|S_\varepsilon(u)\|_{2d/(d+2)}^2 & \le \varepsilon^2\ \|S_\varepsilon(u)\|_{p_m}^{(d+2) \mu p_m/d}\ \|S_\varepsilon(u)\|_{p_0}^{(d+2)(1-\mu)p_0/d} \nonumber\\ 
& \le C(p_0)\ \varepsilon^{(2d+(d+2)(\mu-1)p_0)/d}\ \|u\|_m^{(d+2)\mu p_m/d}\ \|u\|_{p_0}^{(d+2)(1-\mu)p_0/d}\,. \label{suppl4}
\end{align}
Since
$$
1+(\lambda-1) p_0 = \frac{p_m + p_0 - p_m\, p_0}{p_m-p_0} \quad\mathop{\longrightarrow}_{p_0\to 1}\quad \frac{1}{p_m-1}>0\,, $$
and
$$
2d+(d+2)(\mu-1)p_0 = \frac{p_m\, (2d - (d+2)\, p_0)}{p_m-p_0} \quad\mathop{\longrightarrow}_{p_0\to 1}\quad \frac{(d-2)\, p_m}{p_m-1}>0\,,
$$
we can find $p_0\in (1,2d/(d+2))$ sufficiently close to $1$ such that $1+(\lambda-1) p_0>0$ and $2d+(d+2)(\mu-1)p_0>0$. Owing to \eqref{suppl3} and \eqref{suppl4}, this property readily implies that the last two terms of the right-hand side of \eqref{suppl2} converges to zero as $\varepsilon\to 0$. Consequently, letting $\varepsilon\to 0$ in \eqref{suppl2} gives that the estimate \eqref{eq:e} of Lemma~\ref{lem:213} is also true for $\alpha=0$. The proof of the estimate for $\|\nabla v\|_2$ is then the same as that of Lemma~\ref{lem:213}~\eqref{eq:h}.
\end{proof}

After this preparation, we are in a position to define and study the minimisation scheme. Set
\begin{equation*}
 \KK:=(\PP_2\cap \LL^m)(\RR^d) \times \HH^1(\RR^d),
\end{equation*}
where $\PP_2(\RR^d)$ is the set of probability measures on $\RR^d$ with finite second moment. For $\alpha\ge 0$, $h \in (0,1)$ and $(u_0,v_0) \in \KK$, we define the functional
\begin{equation*}
  \FF_h[u,v]:=\frac{1}{2h} \left(\frac{\WW^2_2(u,u_0)}{\chi}+\tau\|v-v_0\|^2_2\right) + \EE_\alpha[u,v]\;,
\end{equation*}
where $\WW_2$ is the Kantorovich-Wasserstein distance on $\PP_2(\RR^d)$. Our aim is to minimise $\FF_h$ on $\KK$.

\subsection{Existence and properties of minimisers}
\begin{proposition}[Existence of minimisers]\label{pr:b4}
Given $\chi \in (0,\chi_c)$, $(u_0,v_0) \in \KK$, and $h\in (0,1)$, there exists at least a minimiser $(u,v) \in \KK$ of $\FF_h$ in $\KK$. Moreover, any minimiser $(u,v)$ of $\FF_h$ in $\KK$ satisfies
\begin{equation}
    \label{eq:214}
    \FF_h[u,v]\le \FF_h[u_0,v_0]=\EE_\alpha[u_0,v_0]\;.
\end{equation}
\end{proposition}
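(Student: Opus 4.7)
The plan is to apply the direct method of the calculus of variations; the one delicate point is the passage to the limit in the indefinite term $-\int uv\dd x$ of $\EE_\alpha$, as all other terms of $\FF_h$ are lower semicontinuous under natural compactness. Since $(u_0,v_0)\in\KK$, the value $\FF_h[u_0,v_0]=\EE_\alpha[u_0,v_0]$ is finite, so $\inf_{\KK}\FF_h\le\EE_\alpha[u_0,v_0]<\infty$. Any $u\in\PP_2(\RR^d)$ satisfies $\|u\|_1=1$, so that, since $\chi<\chi_c$, Lemmas~\ref{lem:213} and~\ref{lem:213b} yield
$$
\EE_\alpha[u,v]\ge\frac{\chls(\chi_c-\chi)}{2\chi}\,\|u\|_m^m\ge 0,
$$
whence $\FF_h\ge 0$ on $\KK$ and the infimum is finite.

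Take a minimising sequence $(u_n,v_n)_n$ in $\KK$. From $\FF_h[u_n,v_n]\le\EE_\alpha[u_0,v_0]+1$ together with the non-negativity of $\EE_\alpha$, I extract uniform bounds on $\WW_2(u_n,u_0)$ (hence on the second moment of $u_n$) and on $\|v_n-v_0\|_2$. The above lower bound on $\EE_\alpha$ then bounds $\|u_n\|_m$, while Lemma~\ref{lem:213}~\eqref{eq:h} bounds $\|\nabla v_n\|_2$ and $\alpha^{1/2}\|v_n\|_2$. Up to extraction, $u_n\rightharpoonup u$ weakly in $\LL^m(\RR^d)$ and narrowly in $\PP_2(\RR^d)$ (the tightness coming from the uniform second moment bound), while $v_n\rightharpoonup v$ weakly in $\HH^1(\RR^d)$ and, by Rellich's theorem, strongly in $\LL^p_{\mathrm{loc}}(\RR^d)$ for every $p\in[1,2d/(d-2))$.

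The main obstacle is the convergence $\int u_n v_n\dd x\to\int uv\dd x$. Decompose
$$
\int u_nv_n\dd x-\int uv\dd x=\int(u_n-u)v\dd x+\int u_n(v_n-v)\dd x.
$$
Since the conjugate exponent $m'=2(d-1)/(d-2)$ lies in $(2,2d/(d-2))$, interpolation between the Sobolev embedding $\HH^1(\RR^d)\hookrightarrow\LL^{2d/(d-2)}(\RR^d)$ and the uniform $\LL^2$-bound gives $v\in\LL^{m'}(\RR^d)$ and $(v_n-v)$ uniformly bounded in $\LL^{m'}(\RR^d)$. The first term of the decomposition vanishes by weak $\LL^m$ convergence of $u_n$ tested against the fixed function $v\in\LL^{m'}(\RR^d)$. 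For the second, I split between $B_R$ and its complement: on $B_R$, the strong convergence $v_n\to v$ in $\LL^{m'}(B_R)$ (Rellich) combined with the weak $\LL^m(B_R)$-convergence of $u_n$ yields $\int_{B_R}u_n(v_n-v)\dd x\to 0$; on $B_R^c$, H\"older's inequality gives
$$
\int_{B_R^c}u_n|v_n-v|\dd x\le \|u_n\|_{\LL^{2d/(d+2)}(B_R^c)}\,\|v_n-v\|_{\LL^{2d/(d-2)}(\RR^d)},
$$
where the second factor is uniformly bounded, and the first tends to zero uniformly in $n$ as $R\to\infty$ by interpolating the uniform $\LL^m$-bound with the tightness estimate $\|u_n\|_{\LL^1(B_R^c)}\le M_2(u_n)/R^2\to 0$ (Chebyshev), using that $2d/(d+2)\in(1,m)$ for $d\ge 3$.

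Combining this convergence with the lower semicontinuity of $\WW_2^2(\cdot,u_0)$ (under narrow convergence), $\|\cdot-v_0\|_2^2$, $\|\cdot\|_2^2$, $\|\nabla\cdot\|_2^2$ (under weak $\HH^1$) and $\|\cdot\|_m^m$ (under weak $\LL^m$), we obtain $\FF_h[u,v]\le\liminf_n\FF_h[u_n,v_n]$. The limit $u$ is a probability measure with finite second moment (both properties preserved or lower semicontinuous under narrow convergence), and $v\in\HH^1(\RR^d)$ as a weak limit, so $(u,v)\in\KK$ is a minimiser. Inequality~\eqref{eq:214} then follows at once by comparison with the admissible competitor $(u_0,v_0)$, for which $\WW_2(u_0,u_0)=0$ and $v_0-v_0=0$.
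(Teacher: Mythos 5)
Your proof is correct and follows essentially the same route as the paper's: the uniform bounds from Lemma~\ref{lem:213}/\ref{lem:213b} (in particular $\FF_h \ge \eta\|u\|_m^m$ on $\KK$ with $\eta>0$), weak compactness of the minimising sequence, lower semicontinuity of all the definite terms, and a localisation argument to pass to the limit in the only sign-indefinite term $\int u_n v_n\dd x$ by combining local Rellich compactness for $v_n$ with a uniform tail bound on $u_n$ coming from the second-moment estimate. The only cosmetic difference is the decomposition $\int u_n v_n - \int uv = \int (u_n-u)v + \int u_n(v_n-v)$ before truncating, where the paper introduces the cut-off $\theta_n$ directly on $\int u_k v_k - \int u_\infty v_\infty$ and runs a diagonal extraction over $n$; both organise the same estimates and yield the identical conclusion.
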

\begin{proof} We proceed in two steps.

\noindent\textbf{Step~1: Estimates.} Consider $(u,v)\in\KK$. The lower bound on the free energy, see Lemma~\ref{lem:213} (i), ensures that
 \begin{equation}\label{216}
   \FF_h[u,v] \ge \frac1{2h} \left(\frac{\WW_2^2(u,u_0)}{\chi}+ \tau \|v-v_0\|^2_2 \right) + \eta \|u\|^m_m\,,
 \end{equation}
where $\eta:=\chls (\chi_c-\chi)/2\chi>0$. Therefore, $\FF_h$ is non-negative in $\KK$ and we may define
\begin{equation}\label{eq:217}
  \omega:= \inf_{(u,v) \in \KK} \FF_h[u,v]\ge 0\;.
\end{equation}
Let $(u_k,v_k)_{k\ge 1}$ be a minimising sequence with $\omega \le \FF_h[u_k,v_k] \le \omega + 1/k$ for $k\ge 1$. Recalling that
\begin{equation*}
  \WW_2(\delta_0,\mu)\le \WW_2(\delta_0,u_0) + \WW_2(u_0,\mu)\,, \qquad \mu\in\PP_2(\RR^d)\,,
\end{equation*}
where $\delta_0$ denotes the Dirac mass in $\RR^d$ centred at $x=0$, we deduce from \eqref{216} that, for $k\ge 1$, 
\begin{align}
  \int_{\RR^d} |x|^2 u_k(x)\dd x & \le 2 \int_{\RR^d} |x|^2 u_0(x)\dd x + 2 \WW_2^2(u_k,u_0) \nonumber\\
  & \le 2 \int_{\RR^d} |x|^2 u_0(x)\dd x + 4h\chi \FF_h[u_k,v_k] \nonumber\\
  & \le 2 \int_{\RR^d} |x|^2 u_0(x)\dd x + 4h\chi (\omega+1)\;. \label{b19b}
\end{align}
It also follows from \eqref{216} and \eqref{eq:217} that, for $k\ge 1$, 
  \begin{equation}\label{b19}
\|u_k\|^m_m + \|v_k-v_0\|_2^2 \le \frac{\FF_h[u_k,v_k]}{\eta} + \frac{2h \FF_h[u_k,v_k]}{\tau} \le \left( \frac{1}{\eta} + \frac{2h}{\tau} \right)\ (\omega + 1)\,.
  \end{equation}
Furthermore, we infer from Lemma~\ref{lem:213} (ii) that
\begin{equation}\label{eq:221}
\|\nabla v_k\|^2_2 \le 4\ \EE_\alpha[u_k,v_k] + C_1\ \|u_k\|_m^m \le C\,,\quad k\ge 1\;.
\end{equation}

\noindent\textbf{Step~2: Passing to the limit.}  Owing to~\eqref{b19b}, \eqref{b19}, and~\eqref{eq:221}, it follows from the Dunford-Pettis theorem that there are $(u_\infty,v_\infty) \in (\LL^1\cap \LL^m)(\RR^d) \times \HH^1(\RR^d)$ and a subsequence of $(u_k,v_k)_k$, which is not relabelled, such that
\begin{align}
  u_k &\rightharpoonup u_\infty \quad\mbox{in $(\LL^1\cap \LL^m)(\RR^d)$}\label{eq:222}\\
  v_k &\rightharpoonup v_\infty \quad\mbox{in $\HH^1(\RR^d)$.}\label{eq:223}
\end{align}
Since $u_k$ is a probability measure for all $k \ge 1$, the convergence~\eqref{eq:222} guarantees that $u_\infty$ is a probability measure. Next, a classical truncation argument and \eqref{b19b} imply that the second moment of $u_\infty$ is finite. Therefore, $u_\infty$ belongs to $\PP_2(\RR^d)$ and $(u_\infty,v_\infty) \in \KK$. 

Next, weak lower semicontinuity arguments and the properties of the Kantorovich-Wasserstein distance allow us to deduce from~\eqref{eq:222} and~\eqref{eq:223} that
\begin{multline}
  \label{eq:224}
  \frac{1}{2h} \left(\frac{\WW_2^2(u_\infty,u_0)}{\chi}+ \tau \|v_\infty-v_0\|^2_2 \right) + \frac{\|u_\infty\|^m_m}{\chi(m-1)} + \frac{\|\nabla v_\infty\|^{2}_2}{2} + \frac{\alpha\,\|v_\infty\|^{2}_2}{2} \\\le \liminf_{k \to \infty} \left(  \FF_h[u_k,v_k]  + \int_{\RR^d}u_k(x)\,v_k(x)\dd x\right)\;.
\end{multline}

We are then left with passing to the limit in the last term of the right-hand side of~\eqref{eq:224}. For that purpose, given $n \ge 1$, we fix a truncation function $\theta_n \in  \C^\infty_0(\RR^d)$ satisfying $0\le\theta_n(x)\le 1$ for all $x \in \RR^d$ and
\begin{equation*}
\theta_n(x):=
\left\{
  \begin{array}{ll}
1&\quad \mbox{if $|x| \le n$}\\
0&\quad \mbox{if $|x| \ge 2n$}\;.
  \end{array}
\right. 
\end{equation*}
By~\eqref{eq:221}, $(\theta_n\,v_k)_{k\ge 1}$ is bounded in $\HH^1_0(B(0,2n))$ for each $n\ge 1$ and, using a diagonal process, we can extract a further subsequence, not relabelled, of $(v_k)_{k\ge 1}$ such that
\begin{equation}
  \label{eq:225}
  \theta_n \,v_k \rightarrow \theta_n \,v_\infty\quad\mbox{in $\LL^p(\RR^d)$ for any $p\in [2,2d/(d-2))$ and $n\ge 1$.}
\end{equation}
It next follows from the H\"older and Sobolev inequalities that
\begin{align}
  \left| \int_{\RR^d}(1-\theta_n)\,u_k\,v_k\dd x\right| &\le \|v_k\|_{2d/(d-2)}\ \left[\int_{\{|x|\ge n\}} u_k^{2d/(d+2)} \dd x \right]^{(d+2)/2d}\notag\\
&\le C\ \|\nabla v_k\|_{2}\,\|u_k\|^{m/2}_{m}\ \left[\int_{\{|x|\ge n\}}u_k\dd x\right]^{1/d}\notag\\
&\le \frac{C}{n^{2/d}}\  \|\nabla v_k\|_{2}\,\|u_k\|^{m/2}_{m}\ \left[\int_{\RR^d}|x|^2u_k\dd x\right]^{1/d}\le \frac{C}{n^{2/d}} \label{eq:226}\;,
\end{align}
the last inequality being a consequence of~\eqref{b19b} and~\eqref{b19}.
Similarly, since $(u_\infty,v_\infty)\in \KK$, we also have
\begin{equation}
  \label{eq:227}
  \left| \int_{\RR^d}(1-\theta_n)\,u_\infty\,v_\infty\dd x\right|  \le  \frac{C}{n^{2/d}}\;.
\end{equation}
Thanks to~\eqref{b19}, \eqref{eq:226}, and~\eqref{eq:227}, we have for all $n\ge 1$
\begin{align*}
\left| \int_{\RR^d}(u_k\,v_k-u_\infty \,v_\infty)\dd x\right| &\le  \left| \int_{\RR^d}\theta_n\,(v_k-v_\infty)\,u_k\dd x\right| +  \left| \int_{\RR^d}\theta_n\,(u_k-u_\infty )\,v_\infty\dd x\right|\\
&\qquad +  \left| \int_{\RR^d}(1-\theta_n)\,u_k\,v_k\dd x\right|  +  \left| \int_{\RR^d}(1-\theta_n)\,u_\infty \,v_\infty\dd x\right|\\
&\le C\ \|\theta_n\,(v_k-v_\infty)\|_{m/(m-1)}+\left| \int_{\RR^d}\theta_n\,(u_k-u_\infty )\,v_\infty\dd x\right|\\
&\qquad+Cn^{-2/d}\;.
\end{align*}
Since $2<m/(m-1)<2d/(d-2)$ and $\theta_n v_\infty\in \LL^{m/(m-1)}(\RR^d)$, the first two terms of the right-hand side of the above inequality converge to zero as $k \to\infty$ by~\eqref{eq:222} and~\eqref{eq:225}. Thus, for all $n \ge 1$,
\begin{equation*}
  \limsup_{k\to \infty}  \left| \int_{\RR^d}(u_k\,v_k-u_\infty \,v_\infty)\dd x\right| \le Cn^{-2/d}\;.
\end{equation*}
Letting $n \to \infty$ gives
\begin{equation}
  \label{eq:228}
 \lim_{k\to \infty} \int_{\RR^d} u_k(x)\,v_k(x)\dd x=\int_{\RR^d}u_\infty(x)\, v_\infty(x)\dd x\;. 
\end{equation}
Combining the inequalities~\eqref{eq:224}, \eqref{eq:228} obtained previously, and the minimising property of $(u_k,v_k)$, we end up with $\FF_h[u_\infty,v_\infty] \le \omega$. Since $(u_\infty,v_\infty)$ belongs to $\KK$, we actually have $\FF_h[u_\infty,v_\infty] = \omega$ by~\eqref{eq:217} and $(u_\infty,v_\infty)$ is thus a minimiser of $\FF_h$ in $\KK$. 

Finally, given a minimiser $(u,v)$ of $\FF_h$ in $\KK$, the inequality~\eqref{eq:214} is a straightforward consequence of the minimising property of $(u,v)$.
\end{proof}

We next improve the regularity of minimisers of $\FF_h$ in $\KK$ by adapting an argument developed in~\cite{MMS09}. The key argument in~\cite{MMS09} is the existence of an additional Liapunov functional for the problem under study which is associated to another ``simpler'' gradient flow, the solutions to this gradient flow being then used as test functions. Though there does not seem to be such a structure hidden in the parabolic-parabolic Keller-Segel system~\eqref{resPKS}, we nevertheless find a simple evolution system, the solutions of which we use as test functions and thereby obtain additional regularity for the minimisers.
\begin{proposition}[Further regularity of the minimisers]\label{pr:b5}
Let $\chi \in (0,\chi_c)$, $(u_0,v_0) \in \KK$, and $h \in (0,1)$. Consider $(u,v) \in \KK$ be a minimiser of $\FF_h$ in $\KK$. Then, $u^{m/2}\in \HH^1(\RR^d)$, $\Delta v - \alpha v + u \in \LL^2(\RR^d)$ and there exists $C_2>0$ depending only on $d$, $\chi$, $\alpha$, and $\tau$ such that 
\begin{equation}
\frac{4}{m\chi}  \|\nabla(u^{m/2})\|_2^2 + \|\Delta v-\alpha v + u\|^2_2  \le 2 A_h(0) + C_2\left(\EE_\alpha[u_0,v_0] + \EE_\alpha[u_0,v_0]^{1/(m-1)}\right)  \label{eq:215} 
\end{equation}
where
\begin{equation}
A_h(0) := \frac{\mathcal{H}[u_0] - \mathcal{H}[u]}{h \chi} + \frac{\tau}{h} \,\left(\|\nabla v_0\|^2_2+\alpha\,\|v_0\|^2_2  - \|\nabla v\|^2_2-\alpha\,\|v\|_2^2\right) \label{b41b}
\end{equation}
and $\mathcal{H}$ is Boltzmann's entropy defined by
\begin{equation}
\mathcal{H}[w] := \int_{\RR^d} w(x)\,\ln{(w(x))}\, \dd x\,. \label{b33}
\end{equation}
\end{proposition}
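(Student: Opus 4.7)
The plan is to apply the flow-interchange technique of~\cite{MMS09}, adapted to the present setting where $\EE_\alpha$ is \emph{not} a Lyapunov functional along the auxiliary flow. As test functions I would take the solutions $(U, V)$ to the decoupled linear problems
\begin{equation*}
\partial_\sigma U = \Delta U,\quad U(0) = u, \qquad \partial_\sigma V = \Delta V - \alpha V,\quad V(0) = v,
\end{equation*}
which are smooth for $\sigma > 0$. The heat semigroup is the Wasserstein gradient flow of Boltzmann's entropy $\mathcal{H}$, and the screened heat semigroup is the $\LL^2$-gradient flow of $F(w) := (\|\nabla w\|_2^2 + \alpha\|w\|_2^2)/2$; both are moreover non-expansive in the corresponding $\LL^m$ and $\HH^1$ norms. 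Combining the minimising inequality $\FF_h[u,v] \le \FF_h[U(s), V(s)]$ with the EVI inequalities of these two gradient-flow structures will give an upper bound on $\EE_\alpha[u,v] - \EE_\alpha[U(s), V(s)]$ linear in $s$ with leading coefficient governed by $A_h(0)$; the precise factor $2$ in the statement will come out of combining this one-sided bound with the absorption step below.

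Next I would differentiate $\EE_\alpha[U(\sigma), V(\sigma)]$ in $\sigma$ using the two PDEs. After integration by parts and a completion of the square in the cross terms $\int U\Delta V\dd x$ and $\int UV\dd x$, this yields
\begin{equation*}
\frac{d}{d\sigma}\EE_\alpha[U, V] = -\frac{4}{m\chi}\,\|\nabla U^{m/2}\|_2^2 - \|\Delta V - \alpha V + U\|_2^2 + R(\sigma),
\end{equation*}
for $\sigma > 0$, where $R(\sigma) := \|U(\sigma)\|_2^2 - \alpha\int_{\RR^d} UV\dd x$ is a remainder reflecting the failure of $\EE_\alpha$ to decrease along $(U, V)$. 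Integrating over $(0, s)$ and combining with the EVI one-sided bound will give an integrated estimate for the dissipation $\int_0^s \bigl(\frac{4}{m\chi}\|\nabla U^{m/2}\|_2^2 + \|\Delta V - \alpha V + U\|_2^2\bigr)\dd\sigma$. The hard part will be controlling $R$: $\|U(\sigma)\|_2^2$ may well blow up as $\sigma \to 0^+$ when $u$ is only known to lie in $\LL^m(\RR^d)$ with $m < 2$, and establishing $\LL^2$-type regularity for $u$ is essentially what we are after.

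The key idea for $R$ is a Gagliardo-Nirenberg interpolation that is scale-critical precisely thanks to $m = 2-2/d$: Sobolev applied to $U^{m/2}$ together with $\LL^m$-$\LL^{2(d-1)/(d-2)}$ interpolation gives $\|U\|_2^2 \le C\,\|U\|_m\,\|\nabla U^{m/2}\|_2^{d/(d-1)}$, and Young's inequality with conjugate exponents $(m, m/(m-1))$ then produces, for every $\epsilon \in (0, 4/(m\chi))$,
\begin{equation*}
\|U(\sigma)\|_2^2 \le \epsilon\,\|\nabla U(\sigma)^{m/2}\|_2^2 + C_\epsilon\,\|U(\sigma)\|_m^{m/(m-1)}.
\end{equation*}
The conjugate exponent $m/(m-1)$ is precisely the source of the power $1/(m-1)$ in the statement. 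The cross term in $R$ is controlled via H\"older and Sobolev through $\bigl|\int UV\dd x\bigr| \le C\,\|U\|_m^{m/2}\,\|\nabla V\|_2$. Using the non-expansivities $\|U(\sigma)\|_m \le \|u\|_m$ and $\|\nabla V(\sigma)\|_2 \le \|\nabla v\|_2$, together with the a priori bounds $\|u\|_m^m + \|\nabla v\|_2^2 \le C\,\EE_\alpha[u_0, v_0]$ from Lemma~\ref{lem:213} and~\eqref{eq:214}, both contributions to $\int_0^s R(\sigma)\dd\sigma$ will be uniformly controlled by $C\,(\EE_\alpha[u_0, v_0] + \EE_\alpha[u_0, v_0]^{1/(m-1)})\,s$.

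Finally I would absorb the $\epsilon\|\nabla U^{m/2}\|_2^2$ term into the left-hand side, divide by $s$, and let $s \to 0^+$. Weak-$\LL^2$ lower semi-continuity of $\sigma \mapsto \|\nabla U(\sigma)^{m/2}\|_2$ and $\sigma \mapsto \|\Delta V(\sigma) - \alpha V(\sigma) + U(\sigma)\|_2$ at $\sigma = 0$, together with Lebesgue's differentiation theorem applied to the averaged left-hand side, then yield the estimate~\eqref{eq:215} and in particular $u^{m/2} \in \HH^1(\RR^d)$, $\Delta v - \alpha v + u \in \LL^2(\RR^d)$.
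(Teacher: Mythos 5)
Your proposal follows essentially the same route as the paper's own proof: the same auxiliary heat and screened-heat flows $(U,V)$, the same decomposition $\frac{d}{d\sigma}\EE_\alpha[U,V] = -\DD + \R$ with $\R = \|U\|_2^2 - \alpha\int UV$, the same EVI-type bounds $\WW_2^2(U(s),u_0)-\WW_2^2(u,u_0) \le 2s(\mathcal{H}[u_0]-\mathcal{H}[U(s)])$ and its $\LL^2$-analogue, the same scale-critical interpolation $\|U\|_2^2 \le C\|U\|_m\|\nabla U^{m/2}\|_2^{2/m}$ (note $2/m = d/(d-1)$) followed by Young with exponents $(m, m/(m-1))$ to absorb $\|U\|_2^2$ into $\DD/2$ -- which is indeed where both the factor $2$ and the power $1/(m-1)$ come from. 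The only imprecision is the concluding limit step: since $\|\nabla(u^{m/2})\|_2$ and $\|\Delta v - \alpha v + u\|_2$ are not yet known to be finite, one cannot directly invoke lower semicontinuity of these norms at $\sigma = 0$; the paper instead introduces the time-averages $\frac{1}{t}\int_0^t U^{m/2}\,{\rm d}s$ and $\frac{1}{t}\int_0^t(\Delta V-\alpha V+U)\,{\rm d}s$, bounds them in $\LL^2$ via Jensen from the running average in the dissipation estimate, identifies their limits as $t\to0$ in $\HH^{-1}$, and then applies weak lower semicontinuity to the averages rather than to the pointwise-in-$\sigma$ quantities.
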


Recall that, if $w\in (\PP_2\cap\LL^m)(\RR^d)$, then $w\, \ln{w}\in \LL^1(\RR^d)$ and there is $C_3>0$ depending only on $d$ such that
\begin{eqnarray}
\int_{\RR^d} w\, |\ln{w}| \dd x & \le & C_3 (1+\|w\|^m_m) + \int_{\RR^d} w(x)\ (1+|x|^2) \dd x\,, \nonumber\\
\mathcal{H}[w] & \ge & -C_3 - \int_{\RR^d} w(x)\ (1+|x|^2) \dd x \,, \label{spirou}
\end{eqnarray}
see, e.g.,~\cite[p.~329]{DPL89}. Therefore, $\mathcal{H}[u_0]$ and $\mathcal{H}[u]$ are well-defined and thus $A_h(0)$ as well. 

\begin{proof}[Proof of Proposition~\ref{pr:b5}]
Let $(u,v)$ be a minimiser of $\FF_h$ in $\KK$. We introduce the solutions $U$ and $V$ to the initial value problems
\begin{equation}
  \label{eq:229}
  \partial_t U - \Delta U=0\quad\mbox{in $(0,\infty) \times \RR^d$,}\qquad U(0)=u\quad\mbox{in $\RR^d$,}
\end{equation}
and
\begin{equation}
  \label{eq:230}
  \partial_t V - \Delta V + \alpha V=0\quad\mbox{in $(0,\infty) \times \RR^d$,}\qquad V(0)=v\quad\mbox{in $\RR^d$.}
\end{equation}
Classical results ensure that $(U(t),V(t))$ belongs to $\KK$ for all $t\ge 0$ and therefore
\begin{equation}
  \label{eq:231}
  \FF_h[u,v] \le \FF_h[U(t),V(t)]\;, \qquad t\ge 0\;.
\end{equation}
Unlike in \cite{MMS09}, the free energy $\EE_\alpha$ is not a Liapunov functional for $(U,V)$. Nevertheless, and this is actually the cornerstone of the proof, the time derivative of $\EE_\alpha$ along the flow of~\eqref{eq:229}--\eqref{eq:230} is the sum of a negative term and a remainder term which can be controlled. The claimed additional regularity on $(u,v)$ results from the computation of the time evolution of $\FF_h[U,V]$ and its proof requires several steps: we first use~\eqref{eq:229}, \eqref{eq:230}, and their gradient flow structures to compute the time evolution of $\FF_h[U,V]$. The second step is devoted to control a remainder term arising in this computation. Once this is done, weak convergence arguments and the lower bound on $\EE_\alpha$, see Lemma~\ref{lem:213} and Lemma~\ref{lem:213b}, are used in the last step to obtain~\eqref{eq:215}.

\noindent{\bf Step 1.}\\
$\bullet$ It follows from~\eqref{eq:229}, \eqref{eq:230}, and integration by parts that
\begin{align*}
  \frac{\dd}{\dd t}\EE_\alpha[U,V]&= -\frac{m}{\chi}\int_{\RR^d} U^{m-2}\,|\nabla U|^2\dd x - \int_{\RR^d} U\ \left( \Delta V - \alpha V + \alpha V \right)\dd x\notag\\
&\qquad - \int_{\RR^d} U\ \left( \Delta V - \alpha V \right) \dd x - \int_{\RR^d} (\Delta V -\alpha V)^2\dd x\notag\\
&= -\DD + \R\,,
\end{align*}
where
\begin{equation*}
  \DD(t):=\frac{4}{m\chi}\|\nabla \left(U^{m/2}(t)\right)\|^2_2 + \|(\Delta V -\alpha V+U)(t)\|^2_2\,, \quad t>0\,,
\end{equation*}
and
\begin{equation*}
  \R(t):= \|U(t)\|_2^2 - \alpha \int_{\RR^d} (UV)(t,x) \dd x\,, \quad t>0 \,.
\end{equation*}
After integration we obtain
\begin{equation}
  \EE_\alpha[U(t),V(t)] - \EE_\alpha[u,v] \le - \int_0^t\DD(s)\dd s+\int_0^t\R(s)\dd s\,, \quad t>0\;. \label{8p8}
\end{equation}
$\bullet$ We next recall that the linear heat equation \eqref{eq:229} can be interpreted as the gradient flow of the functional $\mathcal{H}$ defined in \eqref{b33} for the Kantorovich-Wasserstein distance $\WW_2$ in $\PP_ 2(\RR^d)$, see~\cite{JKO98,Ot01}. Therefore, owing to the differentiability properties of the Kantorovich-Wasserstein distance (see, e.g., \cite[Corollary~10.2.7]{AGS08} and \cite[Theorem~8.13]{Vi03}), it follows from \cite[Theorem~11.1.4]{AGS08} that 
\begin{equation*}
  \frac12 \frac{\dd}{\dd t} \WW_2^2(U(t),u_0) \le \mathcal{H}[u_0] - \mathcal{H}[U(t)] \,,\quad t> 0\;. 
\end{equation*}
Integrating with respect to time we obtain
\begin{equation*}
   \frac12  \left[\WW_2^2(U(t),u_0) -  \WW_2^2(u,u_0)\right] \le \int_0^t \left( \mathcal{H}[u_0] - \mathcal{H}[U(s)] \right)\dd s\;.
\end{equation*}
By the monotonicity of $s \mapsto \mathcal{H}[U(s)]$ we deduce that, for all $t>0$,
\begin{equation}\label{eq:233}
   \frac12  \left[\WW_2^2(U(t),u_0) -  \WW_2^2(u,u_0)\right] \le t \left( \mathcal{H}[u_0] - \mathcal{H}[U(t)] \right)\;.
\end{equation}

\noindent $\bullet$ Furthermore, it readily follows from~\eqref{eq:230} and Young's inequality that 
\begin{align*}
   \frac12 \frac{\dd}{\dd t}  \|V-v_0\|^2_2 &=- \int_{\RR^d} \left[ \nabla(V-v_0) \cdot \nabla V + \alpha\, V\ (V-v_0) \right] \dd x\\
&\le -\|\nabla V\|^2_2+\frac12 \left(\|\nabla V\|^2_2 + \|\nabla v_0\|^2_2 \right)- \alpha \,\|V\|^2_2 \\
&\qquad +\frac\alpha2 \,\left(\|V\|^2_2 + \|v_0\|^2_2 \right)\\
&\le -\frac12 \left(\|\nabla V\|^2_2 + \alpha\, \|V\|^2_2 \right) + \frac12 \left(\|\nabla v_0\|^2_2 + \alpha\, \|v_0\|^2_2 \right)\;.
\end{align*}
Integrating with respect to time and using the monotonicity of $s \mapsto \|\nabla V(s)\|^2_2$ and $s \mapsto \| V(s)\|^2_2$ we end up with
\begin{equation}\label{eq:234}
 \|V(t)-v_0\|^2_2 - \|v-v_0\|^2_2\le t \left( \|\nabla v_0\|^2_2 + \alpha \,\|v_0\|^2_2  -\|\nabla V(t)\|^2_2 - \alpha \,\|V(t)\|^2_2 \right)
\end{equation} for all $t>0$.

$\bullet$ Combining the above estimates~\eqref{eq:231}, \eqref{8p8}, \eqref{eq:233} and~\eqref{eq:234} gives, for $t>0$,
\begin{align*}
  0 &\le \FF_h[U(t),V(t)]- \FF_h[u,v]\\
&\le \frac{t}{h\chi}\left( \mathcal{H}[u_0] - \mathcal{H}[U(t)] \right) - \int_0^t\DD(s)\dd s+\int_0^t\R(s)\dd s\\
&\qquad + \frac{\tau t}{2h} \left( \|\nabla v_0\|^2_2 + \alpha\, \|v_0\|^2_2  -\|\nabla V(t)\|^2_2 - \alpha \,\|V(t)\|^2_2 \right)\;,
\end{align*}
which also reads
\begin{equation}
  \frac1t\int_0^t\DD(s)\dd s\le A_h(t) +\frac1t\int_0^t\R(s)\dd s\,, \quad t>0\;,  \label{eq:235}
\end{equation}
with
$$
A_h(t) := \frac{\mathcal{H}[u_0] - \mathcal{H}[U(t)]}{h\chi} + \frac{\tau}{2h} \left( \|\nabla v_0\|^2_2 + \alpha\, \|v_0\|^2_2  -\|\nabla V(t)\|^2_2 - \alpha \,\|V(t)\|^2_2 \right)\,.
$$
Owing to the continuity and regularity properties of $U$ and $V$, the function $A_h$ has a limit as $t\to 0$ and
\begin{equation}
\lim_{t\to 0} A_h(t) = A_h(0) \;, \label{b41}
\end{equation}
the constant $A_h(0)$ being defined in \eqref{b41b}.

\medskip

\noindent{\bf Step~2.} We now estimate $\R$. Since $2\in (m,2/(m-1))$, the H\"older and Sobolev inequalities ensure that, given $w\in\LL^m(\RR^d)$ such that $|w|^{m/2}\in \HH^1(\RR^d)$, the function $w$ belongs to $\LL^2(\RR^d)$ and
\begin{equation}
\|w\|_2^2 \le \|w\|_m\ \|w\|_{m/(m-1)} \le C\ \|w\|_m\ \left\| \nabla (|w|^{m/2}) \right\|_{2}^{2/m}\,. \label{b42}
\end{equation}
Combining the above estimate with Young's inequality gives
\begin{equation*}
\|U\|_2^2 \le \frac{2}{m\chi} \left\| \nabla (U^{m/2}) \right\|_2^2 + C\ \|U\|_m^{m/(m-1)}\,,
\end{equation*}
and thus
\begin{equation}
\|U\|_2^2  \le  \frac{\DD}{2} + C\ \|U\|_m^{m/(m-1)}\,. \label{b43}
\end{equation}
Moreover, we infer from~\eqref{220}  and Young's inequality that
\begin{equation}
\alpha \|UV\|_1 \le \alpha C \|U\|_m^{m/2}\ \|U\|_1^{1/d}\ \|\nabla V\|_2 \le \frac{\|\nabla V\|_2^2}{2}  + \alpha^2 C \|U\|_m^m \;.\label{b44}
\end{equation}
We now infer from~\eqref{b43}, \eqref{b44}, and the time monotonicity of $s \mapsto \|\nabla V(s)\|^2_2$ and $s \mapsto \|U(s)\|^m_m$ that
$$
\frac{1}{t} \int_0^t \R(s) \dd s \le \frac{1}{2t} \int_0^t \DD(s) \dd s + \frac{1}{2}\ \|\nabla v\|_2^2 + C \left( \|u\|_m^m + \|u\|_m^{m/(m-1)} \right)\,.
$$
By~\eqref{eq:235}, Lemma~\ref{lem:213} and Lemma~\ref{lem:213b} we finally obtain
\begin{eqnarray}
\frac{1}{t} \int_0^t \DD(s) \dd s & \le & 2 A_h(t) + \|\nabla v\|_2^2 + C \left( \|u\|_m^m + \|u\|_m^{m/(m-1)} \right) \nonumber\\
& \le & 2 A_h(t) + C \left( \EE_\alpha[u,v] + \EE_\alpha[u,v]^{1/(m-1)} \right) \,. \label{b45}
\end{eqnarray}

\noindent{\bf Step~3.} We are now left with passing to the limit as $t\to 0$ in the left-hand side of~\eqref{b45}. To this end, define first 
$$
\DD_1(t,x) := \frac{1}{t}\int_0^t U^{m/2}(s,x) \dd s\,, \quad (t,x)\in [0,1]\times\RR^d\,.
$$
Since $m\in (1,2)$ and $U\in \C([0,1];\LL^m(\RR^d))$, the function $\DD_1$ belongs to $\C([0,1];\LL^2(\RR^d))$ and $\DD_1(t)$ converges to $u^{m/2}$ in $\LL^2(\RR^d)$ as $t\to 0$. Combining this property with~\eqref{b41} and~\eqref{b45}, we realise that $(\nabla \DD_1(t))_{t\in (0,1)}$ is bounded in $\LL^2(\RR^d;\RR^d)$ and converges towards $\nabla \left(u^{m/2}\right)$ in $\HH^{-1}(\RR^d;\RR^d)$. Therefore, $\nabla \left(u^{m/2}\right)$ belongs to $\LL^2(\RR^d;\RR^d)$ and
\begin{equation}
\left\| \nabla (u^{m/2}) \right\|_2^2 \le \liminf_{t\to 0} \left\| \nabla \DD_1(t) \right\|_2^2\,. \label{b47}
\end{equation}
Similarly, defining 
$$
\DD_2(t,x) := \frac{1}{t}\ \left( \Delta \int_0^t V(s,x) \dd s - \alpha \int_0^t V(s,x) \dd s + \int_0^t U(s,x) \dd s \right)
$$
for $(t,x)\in [0,1]\times\RR^d$, we infer from~\eqref{b41} and~\eqref{b45} that $(\DD_2(t))_{t\in (0,1)}$ is bounded in $\LL^2(\RR^d)$ while the continuity properties of $U$ and $V$ with respect to time guarantee that $\DD_2(t)$ converges towards $\Delta v - \alpha v + u$ in $\HH^{-1}(\RR^d)$ as $t\to 0$. Consequently, $\Delta v - \alpha v + u\in\LL^2(\RR^d)$ and
\begin{equation}
\left\| \Delta v - \alpha v + u \right\|_2^2 \le \liminf_{t\to 0} \left\| \DD_2(t) \right\|_2^2\,. \label{b48}
\end{equation}
Thanks to~\eqref{b41}, \eqref{b47}, and~\eqref{b48}, we may let $t\to 0$ in~\eqref{b45} and obtain the stated result.
\end{proof}
\begin{corollary}\label{cor:b6}
Let $\chi \in (0,\chi_c)$, $(u_0,v_0) \in \KK$, $h\in (0,1)$, and consider a minimiser $(u,v)$ of $\FF_h$ in $\KK$. Then $u\in\LL^2(\RR^d)$.
\end{corollary}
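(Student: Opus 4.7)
\medskip

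\noindent\textbf{Proof plan.} The corollary is essentially a one-line consequence of Proposition~\ref{pr:b5}, and the genuine work has already been done in the derivation of the inequality \eqref{b42}. The plan is simply to combine three pieces of information already available: the assumption $(u,v) \in \KK$ gives $u \in (\PP_2 \cap \LL^m)(\RR^d)$ and thus $u \in \LL^m(\RR^d)$; Proposition~\ref{pr:b5} guarantees that $u^{m/2} \in \HH^1(\RR^d)$; and the interpolation--Sobolev inequality \eqref{b42} applied to $w = u$ gives
$$\|u\|_2^2 \le C\, \|u\|_m\, \|\nabla(u^{m/2})\|_2^{2/m}.$$
The right-hand side is finite by the two preceding points, so $u \in \LL^2(\RR^d)$.

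The reason this short argument works is that the critical exponent $m = 2(d-1)/d$ is exactly the one for which the H\"older conjugate exponent $m/(m-1)$ rewritten at the level of $u^{m/2}$ produces the Sobolev exponent: $2/(m-1) = 2d/(d-2)$. This is precisely the interpolation used in Step~2 of the proof of Proposition~\ref{pr:b5}, so no new computation is needed. I do not expect any obstacle, since the inequality \eqref{b42} was designed with this statement in mind; the only thing to mention explicitly is that it applies to the non-negative function $w = u$ under the regularity produced by Proposition~\ref{pr:b5}.
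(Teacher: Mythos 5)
Your proof is correct and takes essentially the same route as the paper: observe $u \in \LL^m$ from the definition of $\KK$, invoke Proposition~\ref{pr:b5} for $u^{m/2} \in \HH^1(\RR^d)$, and conclude via inequality~\eqref{b42}. The additional remark about $2/(m-1)=2d/(d-2)$ is accurate and correctly identifies why the interpolation works at the critical exponent.
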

\begin{proof}
Since $u$ belongs to $\LL^m(\RR^d)$ by the definition of $\KK$, Corollary~\ref{cor:b6} follows at once from Proposition~\ref{pr:b5} and \eqref{b42}.
\end{proof}
\subsection{The Euler-Lagrange equation}
\begin{lemma}[Euler-Lagrange equation]\label{lem:b7}
Consider $\chi\in (0,\chi_c)$, $(u_0,v_0)\in\KK$, and $h\in (0,1)$. If $(u,v) \in \KK$ is a minimiser of $\FF_h$ in $\KK$, then 
\begin{equation}
\left| \int_{\RR^d} \left[ \xi (u-u_0) + h\ \nabla\xi\cdot \left( \nabla \left(u^m\right) - \chi\ u\ \nabla v \right) \right] \dd x \right| \le \|\xi\|_{\rmW^{2,\infty}} \frac{\WW_2^2(u,u_0)}{2}\ \label{bb40}
\end{equation}
for any $\xi$ in $\C^\infty_0(\RR^d)$ and
\begin{equation}
\tau\ \frac{v-v_0}{h} -\Delta v +\alpha \,v - u = 0 \;\;\text{ a.e. in }\;\; \RR^d\;. \label{bb41}
\end{equation}
In addition, $\nabla \left(u^m\right) - \chi\ u\ \nabla v\in \LL^{2m/(2m-1)}(\RR^d)$ and satisfies
\begin{equation}
h\ \left\| \nabla \left(u^m\right) - \chi\ u\ \nabla v\right\|_{2m/(2m-1)} \le C_4\ \WW_2(u,u_0)\ \left\|\nabla \left(u^{m/2}\right) \right\|_2^{1/m} \label{bb42}
\end{equation}
for some positive constant $C_4$ depending only on $d$, $\alpha$, $\chi$ and $\tau$.
\end{lemma}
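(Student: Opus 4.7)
The plan is to derive \eqref{bb41} via a vertical perturbation of $v$, to obtain the key identity
\begin{equation*}
h\int_{\RR^d}\bigl[\nabla(u^m)-\chi u\nabla v\bigr]\cdot\zeta\dd x=\int_{\RR^d} u(x)\bigl(T(x)-x\bigr)\cdot\zeta(x)\dd x \qquad\forall\,\zeta\in\C^\infty_0(\RR^d;\RR^d)
\end{equation*}
by horizontal perturbations of $u$ (here $T$ denotes the Brenier optimal transport map from $u$ to $u_0$), and then to extract \eqref{bb40} from it by a Taylor expansion along the transport segments and \eqref{bb42} by $\LL^{2m/(2m-1)}$--$\LL^{2m}$ duality. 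The derivation of \eqref{bb41} is classical: for $w\in\C^\infty_0(\RR^d)$ the pair $(u,v+\delta w)$ belongs to $\KK$, and expanding $\FF_h[u,v+\delta w]-\FF_h[u,v]$ in $\delta$ (only the $v$-dependent terms vary) yields on letting $\delta\to0^\pm$ the weak form $(\tau/h)\int(v-v_0)w\dd x+\int\nabla v\cdot\nabla w\dd x+\alpha\int vw\dd x-\int uw\dd x=0$, which the $\LL^2(\RR^d)$-regularity of $\Delta v-\alpha v+u$ from Proposition~\ref{pr:b5} promotes to the pointwise a.e. identity.

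For the key identity I consider $u_\delta:=(\id+\delta\zeta)\#u$ with $\zeta\in\C^\infty_0(\RR^d;\RR^d)$, which is admissible for $|\delta|$ small enough since $\id+\delta\zeta$ is then a diffeomorphism. Three standard $\delta\to0$ expansions are then needed. Coupling $u_\delta$ to $u_0$ via the push-forward of $u$ by $x\mapsto(x+\delta\zeta(x),T(x))$ gives $\WW_2^2(u_\delta,u_0)\le\WW_2^2(u,u_0)+2\delta\int u(x-T)\cdot\zeta\dd x+O(\delta^2)$. The Jacobian change-of-variables formula yields $\|u_\delta\|_m^m=\|u\|_m^m-(m-1)\delta\int u^m\,{\rm div}\,\zeta\dd x+o(\delta)$. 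Finally, $\int(u_\delta-u)v\dd x=\int u(x)[v(x+\delta\zeta(x))-v(x)]\dd x$, whose difference quotient converges to $\int u\,\zeta\cdot\nabla v\dd x$. Using these expansions with both $\zeta$ and $-\zeta$ in the minimality inequality, and then integrating by parts via $\int u^m\,{\rm div}\,\zeta\dd x=-\int\nabla(u^m)\cdot\zeta\dd x$ (legitimate because $\nabla(u^m)=2u^{m/2}\nabla(u^{m/2})\in\LL^1_{\rm loc}(\RR^d)$ thanks to Proposition~\ref{pr:b5}), one obtains the key identity. The main obstacle sits in the third expansion: $v\in\HH^1$ and $u\in\LL^m$ with $m<2$ would be insufficient a priori, but Corollary~\ref{cor:b6} provides the missing regularity $u\in\LL^2(\RR^d)$, which combined with the $\LL^2(\RR^d)$-strong convergence $\delta^{-1}[v(\cdot+\delta\zeta)-v]\to\zeta\cdot\nabla v$ (via translation continuity of $\nabla v$ in $\LL^2$) makes the limit rigorous.

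It remains to extract \eqref{bb40} and \eqref{bb42} from the key identity. For \eqref{bb40}, I take $\zeta:=\nabla\xi$: since $T\#u=u_0$ one has $\int\xi(u-u_0)\dd x=-\int u(x)\int_0^1\nabla\xi\bigl(x+s(T(x)-x)\bigr)\cdot(T(x)-x)\dd s\dd x$, and the expression on the left of \eqref{bb40} telescopes with the key identity into the second-order Taylor remainder $\int_0^1\int u(T-x)\cdot[\nabla\xi(x)-\nabla\xi(x+s(T-x))]\dd x\dd s$, whose absolute value is bounded by $\|D^2\xi\|_\infty\WW_2^2(u,u_0)/2\le\|\xi\|_{\rmW^{2,\infty}}\WW_2^2(u,u_0)/2$. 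For \eqref{bb42}, the key identity combined with Cauchy--Schwarz in the weighted measure $u\dd x$ gives, for every $\zeta\in\C^\infty_0(\RR^d;\RR^d)$,
\begin{equation*}
\left|h\int[\nabla(u^m)-\chi u\nabla v]\cdot\zeta\dd x\right|\le\WW_2(u,u_0)\left(\int u\zeta^2\dd x\right)^{1/2},
\end{equation*}
while H\"older's inequality with conjugate exponents $(m/(m-1),m)$ together with the identity $m/(m-1)=md/(d-2)$ (the Sobolev embedding exponent associated to $u^{m/2}\in\HH^1(\RR^d)$) provides $\int u\zeta^2\dd x\le\|u\|_{m/(m-1)}\|\zeta\|_{2m}^2=\|u^{m/2}\|_{2d/(d-2)}^{2/m}\|\zeta\|_{2m}^2\le C\|\nabla(u^{m/2})\|_2^{2/m}\|\zeta\|_{2m}^2$. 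Taking the supremum over $\|\zeta\|_{2m}\le 1$ and invoking the density of $\C^\infty_0(\RR^d;\RR^d)$ in $\LL^{2m}(\RR^d;\RR^d)$ yields the claimed integrability of $\nabla(u^m)-\chi u\nabla v$ in $\LL^{2m/(2m-1)}(\RR^d)$ and the bound \eqref{bb42}.
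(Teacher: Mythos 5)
Your proof is correct and follows essentially the same route as the paper's: vertical $\LL^2$-perturbation of $v$ to obtain \eqref{bb41}; horizontal push-forward perturbation $(\id+\delta\zeta)\#u$ to obtain an Euler--Lagrange identity involving the optimal transport map; then \eqref{bb40} by a second-order Taylor expansion along transport rays and \eqref{bb42} by Cauchy--Schwarz in the measure $u\dd x$, H\"older, Sobolev, and $\LL^{2m}$--$\LL^{2m/(2m-1)}$ duality, with Proposition~\ref{pr:b5} and Corollary~\ref{cor:b6} supplying the regularity ($u\in\LL^2$, $u^{m/2}\in\HH^1$) that makes the difference quotient of $\int u\,v\circ T_\delta\dd x$ and the duality argument work. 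The two small cosmetic differences are that you parametrise via the Brenier map $T$ from $u$ to $u_0$ (the paper uses $\nabla\varphi$ from $u_0$ to $u$, its inverse; the formulas match after a change of variables), and that you obtain the one-sided bound on the $\WW_2^2$ difference quotient by exhibiting an explicit sub-optimal coupling rather than invoking the differentiability theorem for $\WW_2^2$ — slightly more elementary and entirely adequate for the two-sided minimality argument you run.
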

\begin{proof}
Pick two smooth test functions $\zeta\in\C^\infty_0(\RR^d;\RR^d)$ and $w\in\C^\infty_0(\RR^d)$ and define $T_\delta:=\id + \delta\,\zeta$ and
\begin{equation*}
 u_\delta= T_\delta \# u\;, \quad v_\delta := v + \delta\, w 
\end{equation*}
for $\delta\in (0,1)$, where $\id$ is the identity function of $\RR^d$ and $T\#\mu$ denotes the image measure or push-forward measure of the measure $\mu$ by the map $T$. Notice that there is $\delta_\zeta$ small enough such that $T_\delta$ is a $\C^\infty$-diffeomorphism from $\RR^d$ onto $\RR^d$ for all $\delta\in (0,\delta_\zeta)$ and ${\rm Det}\, (\nabla T_\delta) = {\rm Det}\, ({\rm I} + \delta\,\nabla \zeta) >0$. 

\noindent $\bullet$ By a standard computation, see~\cite[Theorem~5.30]{Vi03} for instance, 
\begin{equation}
\label{bb46}
\lim_{\delta\to 0}  \frac{\|u_\delta\|^m_m-\|u\|^m_m}{(m-1)\delta} = - \int_{\RR^d}\tr(\nabla \zeta(x))\,u^m(x)\dd x\;.
\end{equation}

\noindent $\bullet$ It is also standard, see~\cite[Theorem~8.13]{Vi03} that
\begin{equation}
 \lim_{\delta \to 0} \frac{\WW^2_2(u_\delta ,u_0)-\WW^2_2(u ,u_0)}{2\delta}= - \int_{\RR^d} (x-\nabla \varphi(x))\cdot \zeta\!\circ\!\nabla\varphi(x)\,u_0(x)\dd x\,, \label{bb49}
\end{equation}
where $\nabla\varphi$ is the optimal map pushing $u_0$ onto $u$, that is, $u=\nabla\varphi\# u_0$ and
\begin{eqnarray*}
\WW^2_2(u,u_0) & = & \int_{\RR^d} |x-\nabla\varphi(x)|^2\ u_0(x) \dd x \\
& = & \inf{ \left\{ \int_{\RR^d} |x-T(x)|^2\ u_0(x) \dd x\ :\ T\#u_0=u \right\} }\;.
\end{eqnarray*}

\noindent $\bullet$ Another classical computation ensures that
\begin{equation}
\lim_{\delta\to 0} \frac{1}{2\delta}\ \left[ \|\nabla v_\delta\|^2_2 + \alpha\, \|v_\delta\|^2_2-\|\nabla v\|^2_2 - \alpha\, \|v\|^2_2 \right] = \int_{\RR^d} \left( \nabla v\cdot\nabla w + \alpha v\, w \right) \dd x\;. \label{bb47}
\end{equation}

\noindent $\bullet$ Finally, by the definition of the push-forward measure, 
\begin{equation*}
\int_{\RR^d} (u\,v-u_\delta\,v_\delta)(x) \dd x = \int_{\RR^d} u(x) \left[v(x)-v(T_\delta(x))-\delta\, w(T_\delta(x)) \right]\dd x\;.
\end{equation*}
Since $u$ is bounded in $\LL^2(\RR^d)$ by Corollary~\ref{cor:b6} and 
$$ 
\frac{v\!\circ\!T_\delta -v}{\delta} \rightharpoonup \zeta \cdot \nabla v\quad \mbox{in $\LL^2(\RR^d)$,} \qquad w\!\circ\!T_\delta  \rightarrow w \quad \mbox{in $\LL^2(\RR^d),$}
$$
we conclude that
\begin{equation}
\label{bb48}
\lim_{\delta \to 0} \frac1\delta\int_{\RR^d} [u\,v-u_\delta\,v_\delta](x) \dd x =  - \int_{\RR^d} \left( u\, \zeta \cdot \nabla v + u\, w \right) \dd x\;.
\end{equation}

\noindent $\bullet$ Since $(u_\delta,v_\delta)$ belongs to $\KK$, we infer from the above estimates~\eqref{bb46}, \eqref{bb49}, \eqref{bb47}, and~\eqref{bb48} that 
\begin{align*}
0 &\le \lim_{\delta\to 0} \frac1\delta \left( \FF_\tau[u_\delta,v_\delta]-\FF_\tau[u,v]\right)\\
&= -\frac1{h \chi} \int_{\RR^d}(x-\nabla \varphi(x))\cdot \zeta\!\circ\!\nabla\varphi(x)\,u_0(x)\dd x + \frac{\tau}{h}\int_{\RR^d} w(x)\,(v(x)-v_0(x)) \dd x\\
&\qquad-\frac1\chi \int_{\RR^d}\tr (\nabla \zeta(x))\,u^m(x) \dd x-\int_{\RR^d} u(x)\, \zeta(x)\cdot \nabla v(x)\dd x \\
&\qquad -  \int_{\RR^d}u(x)\, w(x) \dd x+ \int_{\RR^d} [-\Delta v(x) +\alpha \,v(x)] \, w(x)\dd x\;.
\end{align*}
The above inequality being valid for arbitrary $(\zeta,w)\in\C_0^\infty(\RR^d;\RR^d)\times\C_0^\infty(\RR^d)$, it is also valid for $(-\zeta,-w)$ so that we end up with
\begin{eqnarray}
& & \frac{1}{\chi} \int_{\RR^d} \zeta\cdot \left( \nabla \left(u^m\right) - \chi\ u\ \nabla v \right) \dd x + \int_{\RR^d} \left( \tau\ \frac{v-v_0}{h} - \Delta v + \alpha v - u \right)\ w \dd x \label{bb50}\\
& = & \frac1{h \chi} \int_{\RR^d}(x-\nabla \varphi(x))\cdot \zeta\!\circ\!\nabla\varphi(x)\,u_0(x)\dd x\,. \nonumber
\end{eqnarray}
Observe that, since $\nabla \left(u^m\right) = 2\ u^{m/2}\ \nabla \left(u^{m/2}\right)$ and $u\in\LL^m(\RR^d)$ with $\nabla \left(u^{m/2}\right)\in\LL^2(\RR^d)$ by the definition of $\KK$ and Proposition~\ref{pr:b5}, $\nabla u^m$ belongs to $\LL^1(\RR^d)$ and the first term on the left-hand side of \eqref{bb50} is meaningful.

Now, taking $\zeta=0$ in \eqref{bb50} and using a density argument, the regularity of the minimisers, see Proposition~\ref{pr:b5} and Corollary~\ref{cor:b6}, readily gives \eqref{bb41}. We next take $w=0$ in \eqref{bb50} to obtain
\begin{equation}
\int_{\RR^d} \zeta\cdot \left( \nabla \left(u^m \right)- \chi\ u\ \nabla v \right) \dd x = \frac{1}{h} \int_{\RR^d}(x-\nabla \varphi(x))\cdot \zeta\!\circ\!\nabla\varphi(x)\,u_0(x)\dd x \label{bb51}
\end{equation}
for all $\zeta\in\C_0^\infty(\RR^d;\RR^d)$. 

On the one hand, the Cauchy-Schwarz inequality and the properties of $\nabla\varphi$ ensure that
$$
\left| \int_{\RR^d}(x-\nabla \varphi(x))\cdot \zeta\!\circ\!\nabla\varphi(x)\,u_0(x)\dd x \right| \le \WW_2(u,u_0)\ \left( \int_{\RR^d} |\zeta(x)|^2\ u(x) \dd x \right)^{1/2}
$$
and we infer from the H\"older and Sobolev inequalities that
\begin{eqnarray*}
\left| \int_{\RR^d}(x-\nabla \varphi(x))\cdot \zeta\!\circ\!\nabla\varphi(x)\,u_0(x)\dd x \right| & \le & \WW_2(u,u_0)\ \|\zeta\|_{2m}\ \left\| u^{m/2} \right\|_{2/(m-1)}^{1/m} \\
& \le & C\ \WW_2(u,u_0)\ \|\zeta\|_{2m}\ \left\| \nabla \left(u^{m/2}\right) \right\|_2^{1/m}\,.
\end{eqnarray*}
Recalling \eqref{bb51} we arrive at
$$
\left| \int_{\RR^d} \zeta\cdot \left( \nabla \left(u^m\right)  - \chi\ u\ \nabla v \right) \dd x \right| \le C\ \WW_2(u,u_0)\ \|\zeta\|_{2m}\ \left\| \nabla \left(u^{m/2} \right) \right\|_2^{1/m}
$$
for all $\zeta\in\C_0^\infty(\RR^d;\RR^d)$, whence $\nabla \left(u^m\right)  - \chi u\nabla v\in\LL^{2m/(2m-1)}(\RR^d)$ and \eqref{bb42} by a duality argument.

On the other hand, consider $\xi\in\C_0^\infty(\RR^d)$. Using Taylor's expansion, we have
\begin{equation*}
\left| \xi(x) - \xi (\nabla \varphi(x)) - (\nabla\xi\circ\nabla\varphi)(x)\cdot (x-\varphi(x)) \right| \le \|D^2\xi\|_\infty\ \frac{|x-\nabla\varphi(x)|^2}{2}
\end{equation*}
for $x\in\RR^d$. Multiplying the above inequality by $u_0$, integrating over $\RR^d$, and using the properties of $\nabla\varphi$ give
$$
\left| \int_{\RR^d} \left[ \xi\, u_0 - \xi\, u - (\nabla\xi\circ\nabla\varphi)\cdot (\id-\varphi)\, u_0 \right] \dd x \right| \le \|D^2\xi\|_\infty\ \WW_2^2(u,u_0)\,.
$$
Combining the above inequality with \eqref{bb51} (with $\zeta=\nabla\xi$) leads us to \eqref{bb40}.
\end{proof}

\section{Convergence}\label{sec:conv}

Let $\chi\in (0,\chi_c)$, $(u_0,v_0)\in\KK$, and $h\in (0,1)$. We define a sequence $(u_{h,n},v_{h,n})_{n\ge 0}$ in $\KK$ as follows:
\begin{equation}
(u_{h,0},v_{h,0}) := (u_0,v_0)\,, \label{cc1}
\end{equation}
and, for each $n\ge 0$, 
\begin{eqnarray*}
& & (u_{h,n+1},v_{h,n+1}) \;\;\text{ is a minimiser in $\KK$ of the functional} \\
& & \mathcal{F}_{h,n}[u,v] := \frac{1}{2h}\ \left[ \frac{\WW_2^2(u,u_{h,n})}{\chi} + \tau\ \|v-v_{h,n}\|_2^2 \right] + \EE_\alpha[u,v]\,, \quad (u,v)\in\KK\,. \nonumber
\end{eqnarray*}
Recall that the existence of $(u_{h,n+1},v_{h,n+1})$ is guaranteed by Proposition~\ref{pr:b4} since $\chi\in (0,\chi_c)$. We next define a piecewise constant time dependent pair of functions $(u_h,v_h)$ by
\begin{equation*}
(u_h,v_h)(t) := (u_{h,n},v_{h,n})\,, \qquad t\in [nh,(n+1)h)\,, \qquad n\ge 0\,. 
\end{equation*}

\subsection{Compactness}\label{sec:comp}

By the analysis performed in Section~\ref{sec:jko}, $(u_h(t),v_h(t))$ belongs to $\KK$ for all $t\ge 0$ and  $(u_h,v_h)$ is endowed with several interesting properties which we collect in the next lemma.
\begin{lemma}[Uniform estimates]\label{lem:cc1} There is a positive constant $C_5$ depending only on $d$, $\chi$, $\tau$, $\alpha$, $u_0$, and $v_0$ such that, for $0\le s \le t$, 
\begin{equation}
\EE_\alpha[u_h(t),v_h(t)]  \le  \EE_\alpha[u_h(s),v_h(s)]\,,\label{cc4} \tag{i}
\end{equation}
\begin{equation}
\left( \sum_{n=0}^\infty \frac{\WW_2^2(u_{h,n+1},u_{h,n})}{\chi} + \tau\ \|v_{h,n+1} - v_{h,n}\|_2^2 \right) \le 2\EE_\alpha[u_0,v_0]\ h\,,  \label{cc5}\tag{ii}
\end{equation}
\begin{equation}
\|u_h(t)\|_m + \|\nabla v_h(t)\|_2  \le  C_5\,, \label{cc7}\tag{iii}
\end{equation}
\begin{equation}
\|v_h(t)\|_2^2 + \int_{\RR^d} |x|^2\ u_h(t,x)\dd x  \le  C_5\ (1+t)\,,\label{cc6} \tag{iv}
\end{equation}
and, for $t\ge h$,
\begin{equation}
\int_h^t \left( \left\| \nabla \left( u_h^{m/2} \right)(s) \right\|_2^2 + \| (\Delta v_h - \alpha v_h + u_h)(s)\right) \|_2^2 \dd s  \le  C_5\ (1+t)\,, \label{cc8}\tag{v}
\end{equation}
\begin{equation}
\int_h^t \left\| (\nabla (u_h^m) - \chi u_h \nabla v_h)(s) \right\|_{2m/(2m-1)}^{2m/(m+1)} \dd s  \le  C_5\ (1+t)\,. \label{cc9}\tag{vi}
\end{equation}
\end{lemma}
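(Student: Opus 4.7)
Each iterate $(u_{h,n+1},v_{h,n+1})$ is a minimiser on $\KK$ of $\mathcal{F}_{h,n}$, which is exactly the functional $\FF_h$ of Section~\ref{sec:jko} with reference point $(u_{h,n},v_{h,n})$. The plan is therefore to specialise Propositions~\ref{pr:b4} and~\ref{pr:b5} and Lemma~\ref{lem:b7} to each step of the scheme and then either telescope in $n$ or interpolate via Hölder.

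Applying~\eqref{eq:214} at iteration $n$ gives
\begin{equation}
\frac{1}{2h}\left[\frac{\WW_2^2(u_{h,n+1},u_{h,n})}{\chi} + \tau\,\|v_{h,n+1}-v_{h,n}\|_2^2\right] + \EE_\alpha[u_{h,n+1},v_{h,n+1}] \le \EE_\alpha[u_{h,n},v_{h,n}]. \label{onestep}
\end{equation}
Since $u_{h,n}\in\PP_2(\RR^d)$ has unit mass and $\chi<\chi_c$, Lemmas~\ref{lem:213} and~\ref{lem:213b} guarantee $\EE_\alpha[u_{h,n},v_{h,n}]\ge 0$, so~\eqref{onestep} immediately yields the monotonicity~(i); summing~\eqref{onestep} over $n\ge 0$ and multiplying by $2h$ delivers~(ii). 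For~(iii) I combine (i) with the lower bound~\eqref{eq:e} to bound $\|u_h(t)\|_m$ and then with~\eqref{eq:h} to bound $\|\nabla v_h(t)\|_2$. For~(iv), since $u_{h,n}$ is a probability measure, $\int|x|^2 u_{h,n}\dd x=\WW_2^2(u_{h,n},\delta_0)$, and the triangle inequality together with Cauchy-Schwarz gives
\[
\WW_2(u_{h,n},\delta_0)\le \WW_2(u_0,\delta_0)+\sqrt{n}\,\Bigl(\sum_{j=0}^{n-1}\WW_2^2(u_{h,j+1},u_{h,j})\Bigr)^{1/2};
\]
estimate~(ii) combined with $nh\le t+h$ bounds the right-hand side squared by $C(1+t)$. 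The same argument applied to $\|v_{h,n}-v_0\|_2$, using the second term of~(ii), controls $\|v_h(t)\|_2^2$.

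For~(v), apply Proposition~\ref{pr:b5} at step $n$: writing $A_{h,n}$ for the expression~\eqref{b41b} with $(u,v,u_0,v_0)$ replaced by $(u_{h,n+1},v_{h,n+1},u_{h,n},v_{h,n})$,
\[
\frac{4}{m\chi}\,\|\nabla(u_{h,n+1}^{m/2})\|_2^2 + \|\Delta v_{h,n+1}-\alpha v_{h,n+1}+u_{h,n+1}\|_2^2 \le 2A_{h,n} + C_2\bigl(\EE_\alpha[u_{h,n},v_{h,n}]+\EE_\alpha[u_{h,n},v_{h,n}]^{1/(m-1)}\bigr).
\]
Multiplying by $h$ and summing in $n$, the products $hA_{h,n}$ telescope: their accumulated contribution reduces to $(\mathcal{H}[u_0]-\mathcal{H}[u_h(t)])/\chi$ together with a boundary $\HH^1(\RR^d)$-term in $v$, both bounded by $C(1+t)$ thanks to~\eqref{spirou}, (iii), and~(iv). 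The remaining $h\sum_n(\EE_\alpha+\EE_\alpha^{1/(m-1)})$ is bounded by $C(1+t)$ via~(i). Recognising the left-hand side as $\int_h^{(N+1)h}[\,\cdot\,](s)\dd s$ with $Nh\approx t$ yields~(v).

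The most delicate point is~(vi). Inequality~\eqref{bb42} applied at step $n$ gives
\[
\|\nabla(u_{h,n+1}^m)-\chi u_{h,n+1}\nabla v_{h,n+1}\|_{2m/(2m-1)}\le \frac{C_4}{h}\,\WW_2(u_{h,n+1},u_{h,n})\,\|\nabla(u_{h,n+1}^{m/2})\|_2^{1/m}.
\]
Raising to the power $2m/(m+1)$ and multiplying by $h$, one obtains after summation
\[
h\sum_n\|\cdot\|_{2m/(2m-1)}^{2m/(m+1)}\le C\,h^{(1-m)/(m+1)}\sum_n \WW_2(u_{h,n+1},u_{h,n})^{2m/(m+1)}\,\|\nabla(u_{h,n+1}^{m/2})\|_2^{2/(m+1)}.
\]
I would then apply Hölder in $n$ with conjugate exponents $(m+1)/m$ and $m+1$, chosen so that the Wasserstein factor appears squared (bounded by $Ch$ via~(ii)) and the gradient factor appears squared (bounded by $C(1+t)/h$ via~(v)). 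The three resulting powers of $h$ combine as $h^{(1-m)/(m+1)}\cdot h^{m/(m+1)}\cdot h^{-1/(m+1)}=1$, leaving only the harmless factor $(1+t)^{1/(m+1)}\le C(1+t)$. The main obstacle is precisely this exponent bookkeeping: had the Hölder exponents not been matched to the scaling of~\eqref{bb42} together with that of~(ii) and~(v), the $h$-dependence would fail to cancel.
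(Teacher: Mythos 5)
Your proof is correct and follows essentially the same route as the paper's: telescoping the one-step estimates from Propositions~\ref{pr:b4} and~\ref{pr:b5} together with \eqref{spirou} for (i)--(v), and using \eqref{bb42} plus a H\"older inequality in $n$ with exponents $(m+1)/m$ and $m+1$ for (vi); your $h$-exponent bookkeeping $h^{(1-m)/(m+1)}\cdot h^{m/(m+1)}\cdot h^{-1/(m+1)}=1$ matches the paper's computation with $\vartheta=2m/(m+1)$ exactly.
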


\begin{proof}
$\bullet$ {\bf Energy and moment estimates:} Let $n\ge 0$. It follows from the properties of the minimisers of $\FF_{h,n}$ in $\KK$, see Proposition~\ref{pr:b4}, that
\begin{equation}
\frac{1}{2h}\ \left[ \frac{\WW_2^2(u_{h,n+1},u_{h,n})}{\chi} + \tau\ \|v_{h,n+1}-v_{h,n}\|_2^2 \right] + \EE_\alpha[u_{h,n+1},v_{h,n+1}] \le \EE_\alpha[u_{h,n},v_{h,n}]\,, \label{cc10}
\end{equation}

Consider $s\ge 0$, $t\in (s,\infty)$, and set $N:=[t/h]$ and $\nu:=[s/h]$. If $N\ge 1+\nu$, we sum up \eqref{cc10} from $n=\nu$ to $n=N-1$ to obtain
$$
\frac{1}{2h}\ \sum_{n=\nu}^{N-1} \left[ \frac{\WW_2^2(u_{h,n+1},u_{h,n})}{\chi} + \tau\ \|v_{h,n+1}-v_{h,n}\|_2^2 \right] + \EE_\alpha[u_{h,N},v_{h,N}] \le \EE_\alpha[u_{h,\nu},v_{h,\nu}]\,.
$$
Since both terms of the left-hand side of the above inequality are non-negative by Lemma~\ref{lem:213} and Lemma~\ref{lem:213b}, we deduce \eqref{cc4} as 
\begin{equation*}
\EE_\alpha[u_h(t),v_h(t)] = \EE_\alpha[u_{h,N},v_{h,N}]  \le  \EE_\alpha[u_{h,\nu},v_{h,\nu}] = \EE_\alpha[u_h(s),v_h(s)]\,, 
\end{equation*}
and \eqref{cc5} by taking $s=0$ ($\nu=0$) and $t\to\infty$ in
\begin{equation*}
\sum_{n=\nu}^{N-1} \left[ \frac{\WW_2^2(u_{h,n+1},u_{h,n})}{\chi} + \tau\ \|v_{h,n+1}-v_{h,n}\|_2^2 \right]  \le  2h\, \EE_\alpha[u_{h,\nu},v_{h,\nu}]\,.
\end{equation*}
The estimate \eqref{cc7} directly follows from the lower bound on $\EE_\alpha$, see Lemma~\ref{lem:213} and Lemma~\ref{lem:213b}, while we deduce \eqref{cc6} from \eqref{cc5} thanks to the inequalities
\begin{align*}
\WW_2(\delta_0,u_{h,N}) & \le \WW_2(\delta_0,u_0) + \sum_{n=0}^{N-1} \WW_2(u_{h,n+1},u_{h,n}) \\
& \le \WW_2(\delta_0,u_0) + \sqrt{N}\ \left( \sum_{n=0}^{N-1} \WW_2^2(u_{h,n+1},u_{h,n}) \right)^{1/2}
\end{align*}
and
\begin{align*}
\|v_{h,N}\|_2 & \le \| v_0\|_2 + \sum_{n=0}^{N-1} \|v_{h,n+1} - v_{h,n}\|_2 \\
& \le \| v_0\|_2  + \sqrt{N}\ \left( \sum_{n=0}^{N-1} \|v_{h,n+1} - v_{h,n}\|_2^2 \right)^{1/2}\,.
\end{align*}

\noindent$\bullet$ {\bf Additional estimates:} Assume now that $t\ge h$. By the properties of the minimisers of $\FF_{h,n}$ in $\KK$, see Proposition~\ref{pr:b5},
\begin{align*}
\frac{4h}{m\chi}  &\|\nabla(u_{h,n+1}^{m/2})\|_2^2  +  h\ \|\Delta v_{h,n+1}-\alpha v_{h,n+1} + u_{h,n+1}\|^2_2 \\
& \le  \frac{2}{\chi}\ \left( \mathcal{H}[u_{h,n}] - \mathcal{H}[u_{h,n+1}] \right)+  C_2\, h\, \left(\EE_\alpha[u_{h,n},v_{h,n}] + \EE_\alpha[u_{h,n},v_{h,n}]^{1/(m-1)}\right) \nonumber \\
&\qquad +  \tau \,\left(\|\nabla v_{h,n}\|^2_2+\alpha\,\|v_{h,n}\|^2_2  - \|\nabla v_{h,n+1}\|^2_2-\alpha\,\|v_{h,n+1}\|^2 \right) \,. \nonumber
\end{align*}
Summing from $n=0$ to $N-1$, it follows from \eqref{spirou}, \eqref{cc1}, \eqref{cc4}, and \eqref{cc6} that 
\begin{align*}
 \frac{4h}{m\chi}&  \sum_{n=0}^{N-1} \|\nabla(u_{h,n+1}^{m/2})\|_2^2 + h\ \sum_{n=0}^{N-1} \|\Delta v_{h,n+1}-\alpha v_{h,n+1} + u_{h,n+1}\|^2_2 \\
& \le  \frac{2}{\chi}\ \left( \mathcal{H}[u_{h,0}] - \mathcal{H}[u_{h,N}] \right)+  C_2\, h\, \sum_{n=0}^{N-1} \left(\EE_\alpha[u_{h,n},v_{h,n}] + \EE_\alpha[u_{h,n},v_{h,n}]^{1/(m-1)}\right) \\
& \qquad+  \tau \,\left(\|\nabla v_{h,0}\|^2_2+\alpha\,\|v_{h,0}\|^2_2  - \|\nabla v_{h,N}\|^2_2-\alpha\,\|v_{h,N}\|_2^2 \right) \\
& \le  \frac{2}{\chi}\ \left( \mathcal{H}[u_{h,0}] + C_3 + \int_{\RR^d} |x|^2 u_{h,N}(x)\dd x \right) + \tau \,\left(\|\nabla v_{h,0}\|^2_2+\alpha\,\|v_{h,0}\|^2_2 \right)\\
& \qquad+  C_2\, h\, \sum_{n=0}^{N-1} \left(\EE_\alpha[u_{h,0},v_{h,0}] + \EE_\alpha[u_{h,0},v_{h,0}]^{1/(m-1)}\right) \\
&\le C\ (1+t)\,,
\end{align*}
whence \eqref{cc8}.

Finally, setting $\vartheta:= 2m/(m+1)\in (1,2)$, we infer from~\eqref{bb42}, \eqref{cc5}, \eqref{cc8}, and H\"older's inequality that 
\begin{align*}
  h\, \sum_{n=0}^{N-1}& \left\| \nabla \left(u_{h,n+1}^m\right) - \chi\, u_{h,n+1}\, \nabla v_{h,n+1} \right\|_{2m/(2m-1)}^\vartheta \\
& \le  C_4^{\vartheta}\ h^{1-\vartheta}\ \sum_{n=0}^{N-1} \WW_2^{\vartheta}(u_{h,n+1},u_{h,n})\ \left\| \nabla \left(u_{h,n+1}^{m/2}\right) \right\|_2^{\vartheta/m} \\
& \le C\ h^{1-\vartheta}\ \left( \sum_{n=0}^{N-1} \WW_2^2(u_{h,n+1},u_{h,n}) \right)^{\vartheta/2}\ \left( \sum_{n=0}^{N-1} \left\| \nabla \left(u_{h,n+1}^{m/2}\right) \right\|_2^2 \right)^{(2-\vartheta)/2}\\
& \le  C\ \left( \int_h^{(N+1)h} \left\| \nabla \left(u_h^{m/2}(s)\right) \right\|_2^2\dd s \right)^{(2-\vartheta)/2}\\
&  \le C(1+t)\,,
\end{align*}
which completes the proof.
\end{proof}

The following corollary guarantees the compactness of $(u_h)$, $(v_h)$, and $(\nabla v_h)$ with respect to the space variable $x$:
\begin{corollary}\label{cor:cc2} There is a positive constant $C_6$ depending only on $d$, $\chi$, $\tau$, $\alpha$, $u_0$, and $v_0$ such that, for $t\ge h$, 
\begin{eqnarray*}
\int_h^t \left( \|u_h(s)\|_2^{2m} + \|\nabla u_h(s)\|_m^2 \right) \dd s  &\le&  C_6\, (1+t)\,, \\
\int_h^t \| v_h(s)\|_{\HH^2}^2 \dd s  &\le&  C_6\ (1+t)\,. 
\end{eqnarray*}
\end{corollary}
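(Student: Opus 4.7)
\medskip

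\noindent\textbf{Proof plan.} The three bounds all reduce to the space-time estimates already provided by Lemma~\ref{lem:cc1}, through (i) the interpolation \eqref{b42}, (ii) the chain-rule identity $\nabla u_h = (2/m)\, u_h^{(2-m)/2}\, \nabla(u_h^{m/2})$, and (iii) the $\LL^2$-identity $\Delta v_h = (\Delta v_h - \alpha v_h + u_h) + \alpha v_h - u_h$. None of the steps involves a genuine new difficulty; the main point is only to check the admissibility of the H\"older exponents that will appear.

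For $\int_h^t \|u_h(s)\|_2^{2m}\dd s$, I would apply \eqref{b42} pointwise in $s$ with $w = u_h(s)$, obtaining
\begin{equation*}
\|u_h(s)\|_2^{2m} \le C\, \|u_h(s)\|_m^m\, \|\nabla(u_h^{m/2}(s))\|_2^2\,,
\end{equation*}
and then combine the uniform bound on $\|u_h(s)\|_m$ from \eqref{cc7} with the space-time bound \eqref{cc8} on $\nabla(u_h^{m/2})$. For $\int_h^t \|\nabla u_h(s)\|_m^2\dd s$, I would use that $u_h^{m/2}(s) \in \HH^1(\RR^d)$ by Proposition~\ref{pr:b5} and write
\begin{equation*}
|\nabla u_h|^m = (2/m)^m\, u_h^{m(2-m)/2}\, |\nabla(u_h^{m/2})|^m\,,
\end{equation*}
to which H\"older's inequality with conjugate exponents $2/(2-m)$ and $2/m$, both lying in $(1,\infty)$ since $m\in(1,2)$, applies and yields
\begin{equation*}
\|\nabla u_h(s)\|_m^m \le C\, \|u_h(s)\|_m^{m(2-m)/2}\, \|\nabla(u_h^{m/2}(s))\|_2^m\,.
\end{equation*}
Raising to the power $2/m$ and invoking again \eqref{cc7} and \eqref{cc8} produces the required estimate.

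For the $\HH^2$-bound on $v_h$, I would use the triangle inequality to obtain
\begin{equation*}
\|v_h\|_{\HH^2}^2 \le C\left( \|v_h\|_2^2 + \|\nabla v_h\|_2^2 + \|\Delta v_h - \alpha v_h + u_h\|_2^2 + \|u_h\|_2^2 \right)\,.
\end{equation*}
The first two terms in time integral form are controlled by \eqref{cc6} and \eqref{cc7}, the third by \eqref{cc8}, and for the last one a space-time $\LL^2$-bound on $u_h$ follows at once from the first part of the corollary combined with H\"older's inequality in time (the exponent $2m>2$ making the interpolation admissible on the bounded interval $[h,t]$). The only mildly delicate verification throughout is that the H\"older exponent pair $(2/(2-m),2/m)$ used in the gradient step is finite, which is exactly the condition $m\in(1,2)$ encoded in the critical choice $m = 2 - 2/d$.
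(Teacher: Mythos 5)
Your proposal is correct and follows essentially the same route as the paper: the first two bounds come from \eqref{b42} and the chain rule with the H\"older pair $(2/(2-m),\,2/m)$ exactly as the paper does, and your treatment of the $\HH^2$-bound (triangle inequality on $\Delta v_h = (\Delta v_h - \alpha v_h + u_h) + \alpha v_h - u_h$, plus H\"older in time to get $\int_h^t \|u_h\|_2^2 \dd s$ from $\int_h^t \|u_h\|_2^{2m}\dd s$) is the decomposition the paper's terse final sentence leaves implicit.
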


\begin{proof}
We infer from the regularity of the minimisers of $\FF_{h,n}$ in $\KK$, see Proposition~\ref{pr:b5}, \eqref{b42}, and the estimates of Lemma~\ref{lem:cc1} that 
\begin{equation}
\int_h^t \|u_h(s)\|_2^{2m} \dd s \le C\ \int_h^t \|u_h(s)\|_m^m\ \left\|\nabla (u_h^{m/2})(s)\right\|_2^2 \dd s \le C\ (1+t)\,. \label{fantasio}
\end{equation}

Next, since $m\in (1,2)$, we have $\nabla u_h = 2 u_h^{(2-m)/2}\, \nabla (u_h^{m/2})/m$ and we infer from the estimates \eqref{cc7} and \eqref{cc8} of Lemma~\ref{lem:cc1} that 
$$
\int_h^t \| \nabla u_h(s) \|_m^2 \dd s \le \frac{4}{m^2}\ \int_h^t \left\|\nabla (u_h^{m/2})(s) \right\|_2^2\ \|u_h(s)\|_m^{2-m} \dd s \le C\ (1+t)\,.
$$
Finally, since $m>1$, we combine the estimates \eqref{cc7}--\eqref{cc8} of Lemma~\ref{lem:cc1} with \eqref{fantasio} to conclude.
\end{proof}

We now turn to the compactness with respect to time.

\begin{lemma}[Time equicontinuity]\label{lem:cc3} There is a positive constant $C_7$ depending only on $d$, $\chi$, $\tau$, $\alpha$, $u_0$, and $v_0$ such that, for $0\le s \le t$, 
\begin{eqnarray*}
\|v_h(t)-v_h(s)\|_2 & \le & C_7\ \left( \sqrt{t-s} + \sqrt{h} \right)\,,  \\
\|u_h(t)-u_h(s)\|_{\HH^{-(d+2)}} & \le & C_7\ (1+t)^{(m+1)/2m}\ (t-s+h)^{(m-1)/2m}\,.
\end{eqnarray*}
\end{lemma}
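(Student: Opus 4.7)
Set $N:=[t/h]$ and $\nu:=[s/h]$; since $(u_h,v_h)$ is piecewise constant one has $v_h(t)-v_h(s)=v_{h,N}-v_{h,\nu}$ and $u_h(t)-u_h(s)=u_{h,N}-u_{h,\nu}$, with $(N-\nu)h\le t-s+h$. The estimate on $v_h$ is then a short telescoping argument: the triangle and Cauchy--Schwarz inequalities give
\[
\|v_{h,N}-v_{h,\nu}\|_2 \le \sqrt{N-\nu}\left(\sum_{n=\nu}^{N-1}\|v_{h,n+1}-v_{h,n}\|_2^2\right)^{1/2},
\]
and combining this with \eqref{cc5} and the elementary bound $\sqrt{(N-\nu)h}\le\sqrt{t-s+h}\le\sqrt{t-s}+\sqrt{h}$ delivers the first claim.

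For $u_h$ I plan to argue by duality. Let $\xi\in\C^\infty_0(\RR^d)$ with $\|\xi\|_{\HH^{d+2}}\le 1$; since $d+2>d/2+2$ and $d+1>d/2$, standard Sobolev embeddings control $\|\xi\|_{\rmW^{2,\infty}}$ and $\|\nabla\xi\|_{2m}$ by $C\|\xi\|_{\HH^{d+2}}\le C$. Summing the Euler--Lagrange inequality~\eqref{bb40} from $n=\nu$ to $n=N-1$ and estimating the flux term by H\"older's inequality in $\RR^d$ thus reduces the matter to bounding
\[
hI_1 := h\sum_{n=\nu}^{N-1}\left\|\nabla(u_{h,n+1}^m)-\chi u_{h,n+1}\nabla v_{h,n+1}\right\|_{2m/(2m-1)} \;\;\text{and}\;\; I_2:=\sum_{n=\nu}^{N-1}\WW_2^2(u_{h,n+1},u_{h,n})
\]
by $C(1+t)^{(m+1)/(2m)}(t-s+h)^{(m-1)/(2m)}$.

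For $hI_1$, the discrete H\"older inequality with exponent $\vartheta:=2m/(m+1)$ (the one already used in~\eqref{cc9}) produces
\[
hI_1 \le ((N-\nu)h)^{(m-1)/(2m)}\left(\int_{(\nu+1)h}^{(N+1)h}\left\|\nabla(u_h^m)-\chi u_h\nabla v_h\right\|_{2m/(2m-1)}^\vartheta \dd s'\right)^{1/\vartheta},
\]
and the time integral is controlled by $C(1+t)$ through~\eqref{cc9}; combined with $(N-\nu)h\le t-s+h$ this yields the sought bound. For $I_2$, \eqref{cc5} gives $I_2\le Ch$, which one rewrites as $h^{(m-1)/(2m)}\,h^{(m+1)/(2m)}\le (t-s+h)^{(m-1)/(2m)}(1+t)^{(m+1)/(2m)}$ using $h\le t-s+h$ and $h\le 1\le 1+t$. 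The main subtlety is this exponent book-keeping: the H\"older exponent $\vartheta$ is dictated by the integrability from~\eqref{cc9}, and it is precisely the compatibility of its conjugate weight $(m-1)/(2m)$ with the Wasserstein remainder $I_2$ that makes the estimate close.
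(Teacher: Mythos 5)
Your proof is correct and follows essentially the same strategy as the paper: telescoping plus Cauchy--Schwarz plus \eqref{cc5} for $v_h$, and for $u_h$, summing the Euler--Lagrange inequality~\eqref{bb40}, splitting into the flux contribution (handled by discrete H\"older with exponent $\vartheta=2m/(m+1)$ and \eqref{cc9}) and the Wasserstein remainder (handled by \eqref{cc5}), then concluding by duality via the embeddings $\HH^{d+2}\hookrightarrow\rmW^{2,\infty}$ and $\HH^{d+1}\hookrightarrow\LL^{2m}$. The only cosmetic difference is that you spell out explicitly the absorption of the $O(h)$ Wasserstein remainder into the final bound $(1+t)^{(m+1)/2m}(t-s+h)^{(m-1)/2m}$, a step the paper leaves implicit.
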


\begin{proof}
Consider $s\ge 0$, $t\ge s$ and set $N=[t/h]$ and $\nu=[s/h]$. Either $N=\nu$ and $(u_h,v_h)(t)=(u_h,v_h)(s)$ which readily implies the result. Or $N>\nu$ and, on the one hand, we infer from the estimate~\eqref{cc5} of Lemma~\ref{lem:cc1} that
\begin{align*}
\|v_h(t)-v_h(s)\|_2 & =  \|v_{h,N}-v_{h,\nu}\|_2 \le \sum_{n=\nu}^{N-1} \|v_{h,n+1}-v_{h,n}\|_2 \\
& \le  \sqrt{N-\nu}\ \left( \sum_{n=\nu}^{N-1} \|v_{h,n+1}-v_{h,n}\|_2^2 \right)^{1/2} \\
& \le  \sqrt{\frac{2 \,(N-\nu)\,h\, \EE_\alpha[u_0,v_0]}{\tau}} \le C\ \left( \sqrt{t-s} + \sqrt{h} \right)\,.
\end{align*}
On the other hand, it follows from the Euler-Lagrange equation~\eqref{bb40} that, for $\xi\in\C_0^\infty(\RR^d)$ and $\nu\le n \le N-1$, 
\begin{align*}
\left| \int_{\RR^d}  \xi (u_{h,n+1}-u_{h,n}) \dd x \right| & \le  h\ \int_{\RR^d} |\nabla\xi|\ \left| \nabla \left(u_{h,n+1}^m \right)- \chi\ u_{h,n+1}\ \nabla v_{h,n+1} \right| \dd x \\
& \qquad+ \|\xi\|_{\rmW^{2,\infty}}\  \frac{\WW_2^2(u_{h,n+1},u_{h,n})}{2}\,,
\end{align*}
whence, after summing up these inequalities from $n=\nu$ to $n=N-1$:
\begin{align*}
\left| \int_{\RR^d}  \xi (u_h(t)-u_h(s)) \dd x \right| & =  \left| \int_{\RR^d}  \xi (u_{h,N}-u_{h,\nu}) \dd x \right| \\
& \le  \sum_{n=\nu}^{N-1} \left| \int_{\RR^d}  \xi (u_{h,n+1}-u_{h,n}) \dd x \right| \\
& \le  h\sum_{n=\nu}^{N-1} \|\nabla\xi\|_{2m}\ \left\| \nabla \left(u_{h,n+1}^m \right)- \chi\ u_{h,n+1}\ \nabla v_{h,n+1} \right\|_{2m/(2m-1)} \\
&\qquad +  \|\xi\|_{\rmW^{2,\infty}}\ \sum_{n=\nu}^{N-1} \WW_2^2(u_{h,n+1},u_{h,n}) \,.
\end{align*}
Using the estimates~\eqref{cc7} and~\eqref{cc9} of Lemma~\ref{lem:cc1}, we obtain
$$
\sum_{n=\nu}^{N-1} \WW_2^2(u_{h,n+1},u_{h,n}) \le C\ h\,,
$$
and
\begin{align*}
& h\  \sum_{n=\nu}^{N-1} \left\| \nabla \left(u_{h,n+1}^m \right) - \chi\ u_{h,n+1}\ \nabla v_{h,n+1} \right\|_{2m/(2m-1)}\\
& \qquad\le  \int_{(\nu+1)h}^{(N+1)h} \left\| \nabla \left(u_{h}^m \right) - \chi\ u_h\ \nabla v_h \right\|_{2m/(2m-1)} \dd s \\
& \qquad\le  C\ [1+(N+1)h]^{(m+1)/2m}\ [(N-\nu)h]^{(m-1)/2m} \\
& \qquad\le  C\ (1+t)^{(m+1)/2m}\ (t-s+h)^{(m-1)/2m} \,,
\end{align*}
so that
\begin{align*}
\left| \int_{\RR^d}  \xi (u_h(t)-u_h(s)) \dd x \right|  & \le  C\ (1+t)^{(m+1)/2m}\ (t-s+h)^{(m-1)/2m}\ \|\nabla\xi\|_{2m} \\
& \qquad +  C\ \|\xi\|_{\rmW^{2,\infty}}\ h \,.
\end{align*}
Since $\HH^d(\RR^d)$ and $\HH^{d/2}(\RR^d)$ are continuously embedded in $\LL^\infty(\RR^d)$ and $\LL^{2m}(\RR^d)$, respectively, we end up with
$$
\left| \int_{\RR^d}  \xi (u_h(t)-u_h(s)) \dd x \right| \le C\ (1+t)^{(m+1)/2m}\ (t-s+h)^{(m-1)/2m}\ \|\xi\|_{\HH^{d+2}}\,, 
$$
from which the result follows by a duality argument.
\end{proof}

\subsection{Convergence}\label{sec:conv2}

We are now in a position to establish the strong convergence of $(u_h)_h$ and $(v_h)_h$ required to identify the equations solved by their cluster points as $h\to 0$. For further use, we define the weight $\varrho$ by
$$
\varrho(x) := \frac{1}{\sqrt{1 + |x|^2 }}\,, \quad x\in\RR^d\,.
$$

\begin{proposition}[Convergence]\label{prop:cc4}
There are a sequence $(h_j)_{j\ge 1}$ of positive numbers in $(0,1)$, $h_j\to 0$, and functions
$$
u\in \mathcal{C}([0,\infty);\HH^{-(d+2)}(\RR^d))\,, \qquad v\in \mathcal{C}([0,\infty);\LL^2(\RR^d;\varrho(x)\dd x))
$$
such that, for all $t>0$, $\delta\in (0,1)$, and $T>1$, 
\begin{equation*}
\begin{array}{lcl}
u_{h_j}(t) \longrightarrow u(t) & \text{ in } & \HH^{-(d+2)}(\RR^d)\,, \\
 & & \\
v_{h_j}(t) \longrightarrow v(t) & \text{ in } & \LL^2(\RR^d;\varrho(x)\dd x)\,, 
\end{array} 
\end{equation*}
\begin{equation}
\begin{array}{lcl}
u_{h_j} \longrightarrow u & \text{ in } & \LL^2(\delta,T;\LL^m(\RR^d))\,, \\
 & & \\
v_{h_j} \longrightarrow v & \text{ in } & \LL^2(\delta,T;\HH^1(\RR^d;\varrho(x)\dd x))\,.
\end{array} \label{cc17}
\end{equation}
\end{proposition}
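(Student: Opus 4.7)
The plan is a standard two-step compactness scheme: first extract a subsequence giving pointwise-in-time convergence via a refined Arzel\`a--Ascoli argument based on the time-equicontinuity provided by Lemma~\ref{lem:cc3}, then upgrade to strong $\LL^2$-in-time convergence via an Aubin--Lions type lemma. The weight $\varrho$ enters because $v_h$ lacks any spatial decay beyond the $\LL^2$-bound, while $\varrho(x)\le |x|^{-1}$ gives a free mechanism for tightness at infinity in $\LL^2(\RR^d;\varrho\dd x)$.

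\textbf{Pointwise-in-time convergence.} For $u_h$, the uniform bound in $\LL^m(\RR^d)\cap\LL^1(\RR^d,(1+|x|^2)\dd x)$ furnished by Lemma~\ref{lem:cc1}~(iii)--(iv) ensures that $\{u_h(t):h\in(0,1),\,t\in[0,T]\}$ is relatively compact in $\HH^{-(d+2)}(\RR^d)$: by Sobolev duality $\LL^m(B_R)$ embeds compactly into $\HH^{-(d+2)}(B_R)$, and the second-moment bound furnishes tightness at infinity. Combined with the $\HH^{-(d+2)}$ time modulus from Lemma~\ref{lem:cc3}, a diagonal Arzel\`a--Ascoli argument yields a subsequence $(h_j)$ and $u\in\C([0,\infty);\HH^{-(d+2)}(\RR^d))$ with $u_{h_j}(t)\to u(t)$ in $\HH^{-(d+2)}(\RR^d)$ locally uniformly in $t\ge 0$. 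For $v_h$, the uniform $\HH^1(\RR^d)$-bound from Lemma~\ref{lem:cc1}~(iii), combined with the tail estimate $\int_{|x|>R}\varrho(x)|v_h(t,x)|^2\dd x \le R^{-1}\|v_h(t)\|_2^2$ and the $\LL^2$-bound from Lemma~\ref{lem:cc1}~(iv), provides relative compactness of $\{v_h(t)\}$ in $\LL^2(\RR^d;\varrho\dd x)$. Together with the $\LL^2$-time modulus $\|v_h(t)-v_h(s)\|_2\le C(\sqrt{t-s}+\sqrt{h})$ from Lemma~\ref{lem:cc3}, Arzel\`a--Ascoli provides, along a further extraction still denoted $(h_j)$, a limit $v\in\C([0,\infty);\LL^2(\RR^d;\varrho\dd x))$ with the desired pointwise convergence.

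\textbf{Upgrade to $\LL^2$-in-time convergence.} Corollary~\ref{cor:cc2} supplies uniform bounds on $u_h$ in $\LL^2(\delta,T;\rmW^{1,m}(\RR^d))$ and on $v_h$ in $\LL^2(\delta,T;\HH^2(\RR^d))$. An Aubin--Lions type argument, combining the local Rellich--Kondrachov embeddings $\rmW^{1,m}(B_R)\hookrightarrow\LL^m(B_R)$ and $\HH^2(B_R)\hookrightarrow\HH^1(B_R)$ with the time moduli of continuity just used, then gives the convergences~\eqref{cc17} on every ball. To globalize: tightness of $v_h$ in $\HH^1(\RR^d;\varrho\dd x)$ follows from the $\varrho$-weight together with the $\LL^2$- and $\HH^1$-bounds; tightness of $u_h$ in $\LL^m(\RR^d)$ is obtained by interpolating the $\LL^1$-tail control $\int_{|x|>R}u_h\le CR^{-2}\int |x|^2 u_h$ against the higher integrability $\LL^{dm/(d-m)}$ furnished by $\rmW^{1,m}\hookrightarrow \LL^{dm/(d-m)}$ (valid since $m<d$), through an interpolation of the form $\|u_h\|_{\LL^m(\{|x|>R\})}^m \le \|u_h\|_{\LL^1(\{|x|>R\})}^{\theta m} \|u_h\|_{\LL^{dm/(d-m)}}^{(1-\theta)m}$ for a suitable $\theta\in(0,1)$. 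Passing from weak to strong is automatic once pointwise-in-time convergence and tightness are in hand.

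\textbf{Main obstacle.} The principal difficulty lies in handling the unboundedness of $\RR^d$: the convergences asserted are global in space, so local compactness is insufficient and must be supplemented by a careful tightness argument. For $u_h$, $\LL^m$-tightness is not a direct consequence of the second-moment bound, which only controls $\LL^1$-tails, and one must combine it with the higher-integrability $\rmW^{1,m}$-bound via interpolation. For $v_h$ no moment bound is available, which is precisely why the weight $\varrho$ is built into the statement of the proposition from the outset and why weaker pointwise-in-time convergence (in $\HH^{-(d+2)}$ for $u_h$, in weighted $\LL^2$ for $v_h$) is the natural endpoint.
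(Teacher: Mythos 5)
Your proposal follows essentially the same two-step scheme as the paper: a refined Ascoli--Arzel\`a argument based on the pointwise-in-$t$ compactness of $\{u_h(t)\}$ in $\HH^{-(d+2)}(\RR^d)$ and of $\{v_h(t)\}$ in $\LL^2(\RR^d;\varrho\dd x)$ together with the time moduli of Lemma~\ref{lem:cc3}, followed by an Aubin--Lions/Simon upgrade to strong convergence in $\LL^2(\delta,T;\LL^m(\RR^d))$ and $\LL^2(\delta,T;\HH^1(\RR^d;\varrho\dd x))$ using the bounds of Corollary~\ref{cor:cc2}. The only difference is cosmetic: where the paper simply invokes the compact embeddings $\rmW^{1,m}(\RR^d)\cap\LL^1(\RR^d;(1+|x|^2)\dd x)\hookrightarrow\LL^m(\RR^d)$ and $\HH^2(\RR^d)\hookrightarrow\HH^1(\RR^d;\varrho\dd x)$, you unfold the tightness arguments (second-moment tails for $u_h$ interpolated against the $\LL^{dm/(d-m)}$ bound, and the $\varrho$-weight decay for $v_h$) that justify them.
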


\begin{proof} 
Since $\HH^1(\RR^d)$ is continuously embedded in $\LL^2(\RR^d;\varrho(x)\dd x)$, it follows from the estimates~\eqref{cc7} and~\eqref{cc6} of Lemma~\ref{lem:cc1} that $\{ v_h(t)\ :\ (t,h)\in [0,\infty)\times (0,1) \}$ lies in a compact subset of $\LL^2(\RR^d;\varrho(x)\dd x)$ while the time equicontinuity of Lemma~\ref{lem:cc3} entails that
$$
\limsup_{h\to 0} \int_{\RR^d} |v_h(t,x)-v_h(s,x)|^2\ \varrho(x)\ \dd x \le C_7\ \sqrt{t-s}\,, \qquad 0\le s < t\,.
$$
We then infer from a refined version of the Ascoli-Arzel\`a theorem, see \cite[Proposition~3.3.1]{AGS08}, that there are a sequence $(h_j)_{j\ge 1}$ of positive numbers in $(0,1)$, $h_j\to 0$, and $v\in \mathcal{C}([0,\infty);\LL^2(\RR^d;\varrho(x)\dd x))$ such that, for all $t\ge 0$, 
\begin{equation*}
v_{h_j}(t) \longrightarrow v(t) \;\;\text{ in }\;\; \LL^2(\RR^d;\varrho(x)\dd x)\,. 
\end{equation*}
It next readily follows from the estimate~\eqref{cc6} of Lemma~\ref{lem:cc1} and Lebesgue's dominated convergence theorem that 
\begin{equation}
v_{h_j} \longrightarrow v \;\;\text{ in }\;\; \LL^p(0,T;\LL^2(\RR^d;\varrho(x)\dd x)) \label{cc19}
\end{equation}
for all $p\in [1,\infty)$ and $T>0$.

Furthermore, recall that $\HH^2(\RR^d)$ is compactly embedded in $\HH^1(\RR^d;\varrho(x)\dd x)$ which is continuously embedded in $\LL^2(\RR^d;\varrho(x)\dd x)$. Given $\delta\in (0,1)$ and $T>1$, $(v_h)_h$ is bounded in $\LL^2(\delta,T;\HH^2(\RR^d))$ by Corollary~\ref{cor:cc2}. We may then apply a standard compactness result, see \cite[Lemma~9]{Si87}, to deduce from \eqref{cc19} the second convergence stated in \eqref{cc17}.

Next, $\HH^{1/m}(\RR^d)$ is densely and continuously embedded in $\LL^{m/(m-1)}(\RR^d)$. Consequently, $\LL^m(\RR^d)$ is continuously embedded in $H^{-1/m}(\RR^d)$ and we then conclude that $\LL^m(\RR^d)\cap \LL^1(\RR^d; (1+|x|^2)\dd x)$ is compactly embedded in $\HH^{-(d+2)}(\RR^d)$. Owing to the estimates~\eqref{cc7} and~\eqref{cc6} of Lemma~\ref{lem:cc1}, there is for each $T>0$ a compact subset of $\HH^{-(d+2)}(\RR^d)$ (depending on $T$) in which $u_h(t)$ lies for all $h\in (0,1)$ and $t\in [0,T]$. This fact along with the time equicontinuity of $(u_h)$ in Lemma~\ref{lem:cc3} and \cite[Proposition~3.3.1]{AGS08}, ensure that there are a subsequence of $(h_j)_{j\ge 1}$ (not relabelled) and a function $u\in \mathcal{C}([0,\infty);\HH^{-(d+2)}(\RR^d))$ such that, for all $t\ge 0$, 
\begin{equation}
u_{h_j}(t) \longrightarrow u(t) \;\;\text{ in }\;\; \HH^{-(d+2)}(\RR^d)\,. \label{cc20}
\end{equation}

Thanks to the estimate~\eqref{cc7} of Lemma~\ref{lem:cc1} and the continuous embedding of $\LL^m(\RR^d)$ in $\HH^{-(d+2)}(\RR^d)$, we may combine \eqref{cc20} and Lebesgue's dominated convergence theorem to deduce that 
\begin{equation}
u_{h_j} \longrightarrow u \;\;\text{ in }\;\; \LL^p(0,T;\HH^{-(d+2)}(\RR^d)) \label{cc21}
\end{equation}
for all $p\in [1,\infty)$ and $T>0$.

Finally, given $\delta\in (0,1)$ and $T>0$, it follows from the estimates~\eqref{cc7}--\eqref{cc6} of Lemma~\ref{lem:cc1} and Corollary~\ref{cor:cc2} that 
\begin{equation}
(u_{h_j})_{j\ge 1} \;\;\mbox{ is bounded in }\;\; \LL^2(\delta,T;\rmW^{1,m}(\RR^d))\cap \LL^1(\RR^d; (1+|x|^2)\dd x))\,. \label{cc22}
\end{equation}
Since $\rmW^{1,m}(\RR^d)\cap \LL^1(\RR^d; (1+|x|^2)\dd x)$ is compactly embedded in $\LL^m(\RR^d)$ and $\LL^m(\RR^d)$ is continuously embedded in $\HH^{-(d+2)}(\RR^d)$, a further use of~\cite[Lemma~9]{Si87} allows us to deduce from~\eqref{cc21} and~\eqref{cc22} the first convergence stated in~\eqref{cc17}.
\end{proof}

\subsection{Identifying the limit}\label{sec:itl}

In addition to the convergence of Proposition~\ref{prop:cc4}, we note that the estimates of Corollary~\ref{cor:cc2} ensure that, after possibly extracting a further subsequence, we may assume that
\begin{eqnarray}
& & \Delta v_{h_j} \rightharpoonup \Delta v \;\;\text{ in }\;\; \LL^2((\delta,T)\times \RR^d)\,, \label{cc23} \\
& & u_{h_j} \rightharpoonup u \;\;\text{ in }\;\; \LL^{2m}(\delta,T; \LL^2(\RR^d))\,. \label{cc24}
\end{eqnarray}
Now, checking that $(u,v)$ is a weak solution to \eqref{PKS} follows from Lemma~\ref{lem:b7}, Proposition~\ref{prop:cc4}, \eqref{cc23}, and \eqref{cc24} by classical arguments. Observe in particular that~\eqref{cc17} and~\eqref{cc24} imply that $\left( u_{h_j} \nabla v_{h_j} \right)_{j\ge 1}$ converges weakly towards $u\nabla v$ in $\LL^1((\delta,T)\times\RR^d; \varrho(x)\dd x)$ for all $\delta\in (0,1)$ and $T>1$. We have thus constructed a weak solution to the parabolic-parabolic Keller-Segel system~\eqref{resPKS} and proved Theorem~\ref{main}.

\section*{Acknowledgements} 
Part of this work was done while the authors enjoyed the hospitality and support of the Centro de Ciencias Pedro Pascuale de Benasque.

\bigskip\noindent{\small \copyright\, 2012 by the authors. This paper is under the Creative Commons licence Attribution-NonCommercial-ShareAlike 2.5.}
%

%
\end{document}